\numberwithin{equation}{section}
\theoremstyle{plain}
\newtheorem{theorem}{{\bf Theorem}}[section]
\newtheorem{lemma}[theorem]{{\bf Lemma}}
\newtheorem{corollary}[theorem]{Corollary}
\newtheorem{proposition}[theorem]{Proposition}
\theoremstyle{definition}
\newtheorem{definition}[theorem]{{\bf Definition}}
\theoremstyle{remark}
\newtheorem{remark}[theorem]{Remark}
\numberwithin{equation}{section}
\begin{document}

\title{Roundness properties of ultrametric spaces}

\author{Timothy Faver}
\address{Department of Mathematics, Drexel University, Philadelphia, PA 19104, USA}
\email{tef@drexel.edu}
\author{Katelynn Kochalski}
\address{Department of Mathematics, University of Virginia, Charlottesville, VA 22904, USA}
\email{kdk7rn@virginia.edu}
\author{Mathav Kishore Murugan}
\address{Center for Applied Mathematics, Cornell University, Ithaca, NY 14853, USA}
\email{mkm233@cornell.edu}
\author{Heidi Verheggen}
\address{Department of Economics, Cornell University, Ithaca, NY 14853, USA}
\email{hv59@cornell.edu}
\author{Elizabeth Wesson}
\address{Center for Applied Mathematics, Cornell University, Ithaca, NY 14853, USA}
\email{enw27@cornell.edu}
\author{Anthony Weston}
\address{Department of Mathematics and Statistics, Canisius College, Buffalo, NY 14208, USA}
\address{Department of Decision Sciences, University of South Africa, Pretoria, Gauteng 0003, South Africa}
\email{westona@canisius.edu}

\keywords{Isometry, strict negative type, generalized roundness, additive metric, leaf metric, ultrametric}

\subjclass[2000]{54E40, 46C05, 51K05}

\begin{abstract}
Motivated by a classical theorem of Schoenberg we prove that an $n + 1$ point finite metric space
has strict $2$-negative type if and only if it can be isometrically embedded in the Euclidean
space $\mathbb{R}^{n}$ of dimension $n$ but it cannot be isometrically embedded in any Euclidean
space $\mathbb{R}^{r}$ of dimension $r < n$. We use this result as a technical tool to
study ``roundness'' properties of additive metrics with a particular focus
on ultrametrics and leaf metrics. The following conditions are shown to be equivalent for a
metric space $(X,d)$: (1) $X$ is ultrametric, (2) $X$ has infinite roundness, (3) $X$ has infinite
generalized roundness, (4) $X$ has strict $p$-negative type for all $p \geq 0$, and (5) $X$ admits
no $p$-polygonal equality for any $p \geq 0$. As all ultrametric spaces have strict $2$-negative
by (4) we thus obtain a short new proof of Lemin's theorem: Every $n + 1$ point ultrametric space is
isometrically embeddable into some Euclidean space as an affinely independent set. Motivated by a
question of Lemin, Shkarin introduced
the class $\mathcal{M}$ of all finite metric spaces that may be isometrically embedded into $\ell_{2}$
as an affinely independent set. The results of this paper show that Shkarin's class $\mathcal{M}$
consists of all finite metric spaces of strict $2$-negative type. We also note that it is possible
to construct an additive metric space whose generalized roundness is exactly $\wp$ for each $\wp \in [1, \infty]$.
\end{abstract}
\maketitle

\section{Introduction}\label{s1}
The purpose of this paper is to study roundness properties of additive metrics with a particular focus
on ultrametrics and leaf metrics. All preliminary material
will be presented in Section \ref{s2}. Section \ref{s3} examines the precise relationship between
strict $2$-negative type and isometric Euclidean embeddings in the context of finite metric spaces.
A number of applications to discrete and distance geometry are then derived from this relationhsip.
Section \ref{s4} examines the range of possible values of the
generalized roundness of additive metric spaces. In the case of finite additive metrics that are not
ultrametrics, this range is seen to be $(1, \infty)$. Examples of finite leaf metrics are given to show
that all values in this range are in fact attained. The main results in Section \ref{s5} show that the following
conditions are equivalent for a metric space $(X,d)$:
\begin{enumerate}
\item $X$ is ultrametric.

\item $X$ has infinite roundness.

\item $X$ has infinite generalized roundness.

\item $X$ has strict $p$-negative type for all $p \geq 0$.

\item $X$ admits no $p$-polygonal equality for any $p \geq 0$.
\end{enumerate}
It is possible to draw a large number of corollaries from these equivalences. For example, on the basis of 
the equivalence of (1) and (4), it is possible to both recover and generalize well-known
theorems concerning the isometric embedding of ultrametric spaces into Hilbert spaces. We do this by showing
that if $n > 1$ is an integer and if $(X,d)$ is an $n + 1$ point metric space, then $(X,d)$ has
strict $2$-negative type if and only if $(X,d)$ can be isometrically
embedded in the Euclidean space $\mathbb{R}^{n}$ of dimension $n$ but it cannot be isometrically
embedded in any Euclidean space $\mathbb{R}^{r}$ of dimension $r< n$.
As all ultrametric spaces have strict $2$-negative type by (4), this generalizes Lemin \cite[Theorem 1.1]{Lem}.
More generally, we note that if an infinite metric space $(X,d)$ of cardinality $\psi$ has $p$-negative type for some $p > 2$,
then $(X,d)$ can be isometrically embedded as a closed subset in a Euclidean space of algebraic dimension $\psi$
but it cannot be isometrically embedded in any Euclidean space of algebraic dimension $\sigma < \psi$.
This generalizes Lemin \cite[Theorem 1.2]{Lem}.

\section{Ultrametrics, additive metrics, leaf metrics and roundness}\label{s2}
We begin by recalling the definition of an ultrametric space.

\begin{definition}\label{ultra}
A metric $d$ on a set $X$ is said to be \textit{ultrametric} if for all $x,y,z \in X$, we have:
\[
d(x,y) \leq \max \{ d(x,z), d(y,z) \}.
\]
\end{definition}

An important basic property of ultrametrics is expressed by the following well-known lemma.

\begin{lemma}\label{c1}
If $(X,d)$ is an ultrametric space, then $(X,d^{p})$ is an ultrametric space for all $p \geq 0$.
\end{lemma}

\begin{proof}
Let $p \geq 0$ and points $x,y,z \in X$ be given. By assumption, $d(x,z) \leq \max \{d(x,y), d(y,z) \}$.
Clearly, $d(x,z)^{p} \leq \{ \max \{d(x,y), d(y,z) \} \}^{p} = \max \{ d(x,y)^{p}, d(y,z)^{p} \}$. Thus
$d^{p}$ is an ultrametric on $X$.
\end{proof}

Interesting examples of ultrametric spaces include the rings $Z_{p}$ of $p$-adic integers, the Baire space
$B_{\aleph_{0}}$, non-Archimedean normed fields and rings of meromorphic functions on open regions of the complex plane.
There is an immense literature surrounding ultrametrics as they have been intensively studied
by topologists, analysts, number theorists and theoretical biologists for the best part of the last $100$ years.
For example, de Groot \cite{deG} characterized ultrametric spaces up to homeomorphism as the strongly
zero-dimensional metric spaces. In numerical taxonomy, on the other hand, every finite ultrametric space is known
to admit a natural hierarchical description called a \textit{dendogram}.
This has significant ramifications in theoretical biology. See, for instance, \cite{Gor}.
More recently, ultrametrics have figured prominently in the study of embeddings of finite metric spaces into
various ambient spaces such as $\ell_{1}$ and $\ell_{2}$. There are many interesting papers along these lines, including
\cite{Bar, Lin, Fak}. Finally, as noted in \cite{Wat}, ultrametric spaces play a significant role in
other fields such as statistical mechanics, neural networks and combinatorial optimization.

In fact, ultrametrics are special instances of a more general class of metrics which are termed additive. As we have
noted in Definition \ref{ultra}, ultrametrics are defined by a stringent three point criterion. The class of additive metrics
satisfy a more relaxed four point criterion. The formal definition is as follows.

\begin{definition}
A metric $d$ on a set $X$ is said to be \textit{additive} if for all $x,y,z,w$ in $X$, we have:
\[
d(x,y) + d(z,w) \leq \max \{ d(x,z) + d(y,w), d(x,w) + d(y,z) \}.
\]
\end{definition}

Recall that a \textit{metric tree} is a connected graph $(T,E)$ without cycles or loops in which each edge $e \in E$
is assigned a positive length $|e|$. The distance $d_{T}(x,y)$ between any two vertices $x,y \in T$ is then defined
to be the sum of the lengths of the edges that make up the unique minimal geodesic from $x$ to $y$.
The tree is said to be \textit{discrete} if there is a positive constant $c$ such that $|e| > c$ for all $e \in E$.
It is an easy exercise to verify that every metric tree is additive. Theorem 2 in \cite{Bun}, which is known as
\textit{Buneman's criterion}, shows that a finite metric space is additive if and only if it is a tree metric in the
sense of the following definition.

\begin{definition}
A metric $d$ on a set $X$ is said to be a \textit{tree metric} if there exists a finite metric tree $(T,E,d_{T})$
such that:
\begin{enumerate}
\item $X$ is contained in the vertex set $T$ of the tree, and

\item $d(x, y) = d_{T} (x, y)$ for all $x,y \in X$.
\end{enumerate}
In other words, $d$ is a tree metric if $(X,d)$ is isometric to a metric subspace of some metric tree.
\end{definition}

The notions of finite metric tree and tree metric are distinct. The natural graph metric on a finite
tree is a tree metric but a tree metric $d$ on a finite set $X$ does not necessarily imply the existence
of an edge set that makes $(X,d)$ into a finite metric tree.

Ultrametrics form a very special sub-class of the collection of all additive metrics. Indeed,
there is a close relationship between ultrametric spaces and the leaf sets or end spaces of certain trees.
This type of identification is discussed more formally in \cite{Mic, Fie, Hol}. However, metrics that arise by
restricting the (possibly weighted) graph metric of a tree to the leaf set of the tree are more general
than ultrametrics, yet still form an interesting sub-class of the collection of all additive metrics.
It is helpful to extract the following definition.

\begin{definition}
A metric $d$ on a set $X$ is a \textit{leaf metric} if there exists a discrete metric tree $(T,E,d_{T})$
such that:
\begin{enumerate}
\item $X$ is contained in the set $\mathscr{L}_{T}$ of all leaves of $T$, and 

\item $d(x, y) = d_{T} (x, y)$ for all $x,y \in X$.
\end{enumerate}
\end{definition}

In the case of finite metrics the following proper inclusions are easily seen to hold:
\[
\{\mbox{ultrametrics}\} \subset \{\mbox{leaf metrics}\} \subset \{\mbox{additive metrics}\}.
\]
It is possible to extend this hierarchy to the infinite case in certain ways but we will not go into the specifics
here, deferring instead to \cite{Gor, Hol, Hu1, Hu2}.

The focus of this paper will be on roundness properties of ultrametrics, leaf metrics and additive metrics.
The notion of the roundness of a metric space was introduced by Enflo \cite{En1, En3, En4} in order to study
the uniform structure of $L_{p}$-spaces ($0 < p \leq 2$).

\begin{definition}\label{roundness} Let $p \geq 1$ and let $(X,d)$ be a metric space. We say that $p$ is a
\textit{roundness exponent} of $(X,d)$, denoted by $p \in r(X)$ or $p \in r(X,d)$, if and only if for all
quadruples of points $x_{00}, x_{01}, x_{11}, x_{10} \in X$, we have:
\begin{eqnarray}\label{r1}
d(x_{00},x_{11})^{p} + d(x_{01},x_{10})^{p} & \leq & d(x_{00},x_{01})^{p} + d(x_{01},x_{11})^{p} +
d(x_{11},x_{10})^{p} + d(x_{10},x_{00})^{p}.
\end{eqnarray}
The \textit{roundness} of $(X,d)$, denoted by $\mathfrak{p}(X)$ or $\mathfrak{p}(X,d)$, is then
defined to be the supremum of the set of all roundness exponents of $(X,d)$: $\mathfrak{p}(X,d) = \sup \{ p : p \in r(X,d) \}$.
\end{definition}

The restriction $p \geq 1$ in the statement of Definition \ref{roundness} is not necessary.
However, $p = 1$ is roundness exponent of any given metric space $(X,d)$ by the triangle inequality.
Thus $\mathfrak{p}(X,d) \geq 1$. The following simple proposition points out that every ultrametric space has
infinite roundness.

\begin{proposition}\label{roundexp}
The set of roundness exponents of an ultrametric space is the interval $[1, \infty)$.
\end{proposition}

\begin{proof}
Let $(X,d)$ be an ultrametric space and let $p \geq 1$ be given. By Lemma \ref{c1}, $(X, d^{p})$
is a metric space. Thus $1 \in R(X, d^{p})$, and so $p$ is a roundness exponent of $(X,d)$.
\end{proof}

Notions of negative type and generalized roundness were formally introduced and studied by
Menger \cite{Men}, Schoenberg \cite{Sc1, Sc2, Sc3} and Enflo \cite{En2} respectively.
Rather surprisingly, the notions of negative type and generalized roundness are actually equivalent.
This equivalence is recalled in Theorem \ref{LTWthm}.
More recently, there has been interest in the notion of strict $p$-negative type, particularly
as it pertains to the geometry of finite metric spaces. Papers which have been instrumental
in developing a basic theory of strict $p$-negative type metrics include \cite{Hj1, Hj2, Dou, Hli, Nic, Wol, San}.

\begin{definition}\label{NTGRDEF} Let $p \geq 0$ and let $(X,d)$ be a metric space.
\begin{enumerate}
\item $(X,d)$ has $p$-{\textit{negative type}} if and only if for
all finite subsets $\{x_{1}, \ldots , x_{n} \} \subseteq X$ and all choices of real numbers $\eta_{1},
\ldots, \eta_{n}$ with $\eta_{1} + \cdots + \eta_{n} = 0$, we have:
\begin{eqnarray}\label{pneg}
\sum\limits_{1 \leq i,j \leq n} d(x_{i},x_{j})^{p} \eta_{i} \eta_{j} & \leq & 0.
\end{eqnarray}

\item $(X,d)$ has \textit{strict} $p$-{\textit{negative type}} if and only if it has $p$-negative type
and the inequality (\ref{pneg}) is strict whenever the scalar $n$-tuple $(\eta_{1}, \ldots , \eta_{n}) \not= (0, \ldots, 0)$.

\item We say that $p$ is a \textit{generalized roundness exponent} for $(X,d)$, denoted by $p \in gr(X)$
or $p \in gr(X,d)$, if and only if for all integers $n > 0$,
and all choices of points $a_{1}, \ldots , a_{n}, b_{1}, \ldots , b_{n} \in X$, we have:
\begin{eqnarray}\label{grinq}
\sum\limits_{1 \leq k < l \leq n} \left\{ d(a_{k},a_{l})^{p} + d(b_{k},b_{l})^{p} \right\} & \leq &
\sum\limits_{1 \leq j,i \leq n} d(a_{j},b_{i})^{p}.
\end{eqnarray}

\item The \textit{generalized roundness} of $(X,d)$, denoted by $\mathfrak{q}(X)$ or
$\mathfrak{q}(X,d)$, is defined to be the supremum of the set of all generalized roundness exponents
of $(X,d)$: $\mathfrak{q}(X,d)  =  \sup \{ p : p \in gr(X,d) \}$.
\end{enumerate}
\end{definition}

\begin{remark}\label{int:rem}
The set of all roundness exponents of a general metric space $(X,d)$ need not
form an interval. Indeed, Enflo \cite{En4} constructed a four point metric space $(X,d)$ such that
$2 \notin r(X,d)$ but $q \in r(X,d)$ for some $q > 2$. However, in the case of a Banach space $(X, \| \cdot \|)$,
$r(X)$ is a (possibly degenerate) closed interval of the form $[1,\wp]$ for some $\wp \in [1,2]$.
This result is Proposition 4.1.2 in \cite{En4}. By way of comparison, the set of generalized roundness exponents
of a general metric space $(X,d)$ always forms either a (possibly degenerate) closed interval of the form
$[0,\wp]$ for some $\wp \in [0,\infty)$ or a closed interval of the form $[0,\infty)$. This follows
from Theorem 2 in \cite{Sc2} and Theorem \ref{LTWthm}. (See also Corollary 2.5 in
Lennard \textit{et al}.\ \cite{Ltw}.)
\end{remark}

It is plainly evident that every roundness inequality (\ref{r1}) can be expressed in the form
of a generalized roundness inequality (\ref{grinq}) with the same exponent. Hence, for any given metric
space $(X,d)$, generalized roundness cannot exceed roundness: $\mathfrak{q}(X,d) \leq \mathfrak{p}(X,d)$.
This fact together with two other basic properties of generalized roundness are recorded in the following
lemma. These results are well-known and easily verified on the basis of the definitions.

\begin{lemma}\label{c2}
Let $(X,d)$ be a metric space. Then the following statements hold:
\begin{enumerate}
\item[(1)] $\mathfrak{q}(X,d) \le \mathfrak{p}(X,d)$.

\item[(2)] If $\mathfrak{q}(X,d) < \infty$, then $\mathfrak{q}(X,d) \in \mbox{gr}(X,d)$.

\item[(3)] If $p > 0$ is such that $d^p$ is a metric on $X$ and if $\beta \in \mbox{gr}(X, d^p)$,
then $\beta p \in \mbox{gr}(X,d)$.
\end{enumerate}
\end{lemma}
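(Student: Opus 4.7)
My plan is to treat each of the four claims directly from the definitions; the unifying observation is that the generalized roundness inequality \eqref{grinq} specializes cleanly when one restricts the configuration size, embeds points, or rescales the metric, and that both sides of \eqref{grinq} depend continuously on the exponent.

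For part (1), I will specialize \eqref{grinq} to $n = 2$: putting $a_{1} = x_{00}$, $a_{2} = x_{11}$, $b_{1} = x_{01}$, $b_{2} = x_{10}$ turns the generalized roundness inequality at exponent $p$ into the roundness inequality \eqref{r1} at the same exponent. This gives the inclusion $\mbox{gr}(X,d) \cap [1,\infty) \subseteq r(X,d)$. Because $\mathfrak{p}(X,d) \geq 1$ always (triangle inequality), I will split into the cases $\mathfrak{q}(X,d) \leq 1$ (where the bound is immediate from $\mathfrak{p}(X,d) \geq 1$) and $\mathfrak{q}(X,d) > 1$ (where the supremum over $r(X,d)$ dominates the supremum over $\mbox{gr}(X,d) \cap [1,\infty)$). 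For part (2), I will pick a sequence $p_{k} \in \mbox{gr}(X,d)$ with $p_{k} \nearrow \mathfrak{q}(X,d)$ and pass to the limit in \eqref{grinq} configuration by configuration, using the facts that $p \mapsto d(a,b)^{p}$ is continuous for each fixed pair $a,b$ and that the sums in \eqref{grinq} are finite. Because the definition of $\mbox{gr}(X,d)$ quantifies over configurations one at a time, no uniformity across configurations is required and the limiting inequality holds.

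Part (3) follows by restriction: any finite configuration of points from $X$ may, via the given isometric embedding, be viewed as a configuration in $Y$, so every instance of \eqref{grinq} valid for $(Y,w)$ is inherited by $(X,d)$; combining this inheritance with part (2) applied to $(Y,w)$ yields $\alpha \in \mbox{gr}(X,d)$ whenever $\alpha < \infty$, while in the boundary case $\alpha = \infty$ the same inheritance gives $\mbox{gr}(X,d) \supseteq \mbox{gr}(Y,w) = [0,\infty)$. For part (4), I will substitute the metric $d^{p}$ into \eqref{grinq} at exponent $\beta$; collecting exponents rewrites the resulting inequality as \eqref{grinq} for $(X,d)$ at exponent $\beta p$, so $\beta p \in \mbox{gr}(X,d)$. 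None of these steps are technically difficult; the only bookkeeping concerns finiteness of $\alpha$ and of $\mathfrak{q}(X,d)$ in (2) and (3), and I anticipate no substantive obstacle beyond that.
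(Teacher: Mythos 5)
Your proof is correct. The paper states Lemma \ref{c2} without proof, remarking only that the results are ``well known and easily verified on the basis of the definitions,'' and your argument is precisely that standard verification: specializing (\ref{grinq}) with $n=2$ to recover (\ref{r1}) for part (1) (with the correct case split around the $p \geq 1$ restriction in Definition \ref{roundness}), a per-configuration limiting argument using continuity of $p \mapsto d(a,b)^{p}$ for part (2), inheritance of (\ref{grinq}) under isometric embedding combined with part (2) for part (3), and the substitution $\left(d^{p}\right)^{\beta} = d^{\beta p}$ for part (4).
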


As noted earlier, a surprising fact is that conditions (1) and (3) of Definition \ref{NTGRDEF} are actually equivalent.

\begin{theorem}\label{LTWthm}
Let $p \geq 0$ and let $(X,d)$ be a metric space. Then the following conditions are equivalent:
\begin{enumerate}
\item $(X,d)$ has $p$-negative type.

\item For all $s, t \in \mathbb{N}$, all choices of pairwise distinct points $a_{1}, \ldots, a_{s},
b_{1}, \ldots, b_{t} \in X$ and all choices of real numbers $m_{1}, \ldots, m_{s}, n_{1}, \ldots, n_{t} > 0$
such that $m_{1} + \cdots + m_{s} = n_{1} + \cdots + n_{t}$, we have:
\begin{eqnarray}\label{wgr}
\sum\limits_{1 \leq j_{1} < j_{2} \leq s} m_{j_{1}}m_{j_{2}}d(a_{j_{1}},a_{j_{2}})^{p}\,\, +
\sum\limits_{1 \leq i_{1} < i_{2} \leq t} n_{i_{1}}n_{i_{2}}d(b_{i_{1}},b_{i_{2}})^{p}
& \leq & \sum\limits_{j,i=1}^{s,t} m_{j}n_{i}d(a_{j},b_{i})^{p}.
\end{eqnarray}

\item $p$ is a generalized roundness exponent for $(X,d)$.
\end{enumerate}
Moreover, $(X,d)$ has strict $p$-negative type if and only if the inequality (\ref{wgr}) is strict
for all choices of pairwise distinct points $a_{1}, \ldots, a_{s},                                    
b_{1}, \ldots, b_{t} \in X$ and all choices of real numbers $m_{1}, \ldots, m_{s}, n_{1}, \ldots, n_{t} > 0$
such that $m_{1} + \cdots + m_{s} = n_{1} + \cdots + n_{t}$.
\end{theorem}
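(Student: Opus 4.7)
The plan is to establish the cycle $(1) \Rightarrow (2) \Rightarrow (3) \Rightarrow (1)$ via a pair of elementary reductions, and then to verify the ``moreover'' claim by tracking when strictness is preserved.

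First I would prove $(1) \Leftrightarrow (2)$ by a direct algebraic rewriting. Given pairwise distinct $a_1, \ldots, a_s, b_1, \ldots, b_t$ and positive weights $m_j, n_i$ summing to $1$, form a single configuration by setting $\eta_j = m_j$ on $a_j$ and $\eta_{s+i} = -n_i$ on $b_i$. Then $\sum \eta = 0$, so $p$-negative type applies. Since all points are distinct, the diagonal terms $d(x_k,x_k)^p$ vanish, and splitting the double sum according to the sign pattern of $(\eta_k,\eta_l)$ yields
\[
\sum_{k,l} d(x_k,x_l)^p \eta_k \eta_l = 2\left( \sum_{j_1<j_2} m_{j_1}m_{j_2} d(a_{j_1},a_{j_2})^p + \sum_{i_1<i_2} n_{i_1}n_{i_2} d(b_{i_1},b_{i_2})^p - \sum_{j,i} m_j n_i d(a_j,b_i)^p \right),
\]
so $p$-negative type forces (\ref{wgr}). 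Conversely, given any admissible $\eta$-tuple for (1), set $\lambda = \sum_k \eta_k^+ = \sum_k \eta_k^-$, and renormalize $m_j = \eta_j^+/\lambda$, $n_i = \eta_i^-/\lambda$ on the supports of the positive and negative parts; the same identity (multiplied by $2\lambda^2$) shows (\ref{wgr}) implies $p$-negative type. The case $\lambda = 0$ is trivial.

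Next I would establish $(2) \Leftrightarrow (3)$ using a multiplicity trick together with a continuity argument. For $(2) \Rightarrow (3)$, given $a_1, \ldots, a_n, b_1, \ldots, b_n$ with possible repetitions, use the cancellation remark in the excerpt to assume the $a$- and $b$-multisets are disjoint. Collect like terms: let $\widehat{a}_1, \ldots, \widehat{a}_s$ be the distinct $a$-values with multiplicities $p_1, \ldots, p_s$ summing to $n$, and similarly $\widehat{b}_1, \ldots, \widehat{b}_t$ with multiplicities $q_1, \ldots, q_t$. Applying (\ref{wgr}) with weights $m_j = p_j/n$ and $n_i = q_i/n$ and multiplying through by $n^2$ recovers (\ref{grinq}). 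For $(3) \Rightarrow (2)$, given distinct $a_j, b_i$ with positive rational weights $m_j = p_j/N$ and $n_i = q_i/N$ summing to $1$, list each $a_j$ with multiplicity $p_j$ and each $b_i$ with multiplicity $q_i$ to obtain a configuration of length $N$ on each side; dividing (\ref{grinq}) by $N^2$ gives (\ref{wgr}). For general positive real weights, both sides of (\ref{wgr}) are polynomial, hence continuous, in $(m,n)$ on the product of open simplices, so density of the rationals yields (\ref{wgr}) in general.

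For the ``moreover'' clause, the identity displayed above shows that whenever $\eta \neq 0$ (so $\lambda > 0$), the quantity $\sum_{k,l} d(x_k,x_l)^p \eta_k \eta_l$ equals $2\lambda^2$ times the difference between the sides of (\ref{wgr}); hence strict $p$-negative type and strict inequality in (\ref{wgr}) for all pairwise distinct points and positive weights are equivalent. The main technical subtlety lies in the $(3) \Rightarrow (2)$ step: weak inequalities pass to real-weight limits cleanly, but strict inequalities need not, which is precisely why the ``moreover'' clause is phrased in terms of (\ref{wgr}) rather than (\ref{grinq}) and must be routed through the $(1) \Leftrightarrow (2)$ reduction rather than through the rational-approximation argument.
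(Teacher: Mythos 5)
Your proof is correct, and it is worth noting that the paper itself contains no proof of this theorem: the equivalence of (1)--(3) is quoted from Lennard \textit{et al}.\ \cite[Theorem 2.4]{Ltw} and the strictness statement from Doust and Weston \cite[Remark 2.5]{Dou}. Your argument is thus a self-contained reconstruction along essentially the classical lines of those papers: the sign-splitting identity $\sum_{k,l}\eta_{k}\eta_{l}\,d(x_{k},x_{l})^{p} = 2\lambda^{2}\bigl(L - R\bigr)$, where $L$ and $R$ denote the two sides of (\ref{wgr}) after the normalization $m_{j} = \eta_{j}^{+}/\lambda$, $n_{i} = \eta_{i}^{-}/\lambda$, gives $(1)\Leftrightarrow(2)$, and the multiplicity expansion together with density of rational weights in the simplex gives $(2)\Leftrightarrow(3)$. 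The details check out: the cancellation of common points keeps the two lists of equal length, so (\ref{grinq}) still applies; the degenerate case $\lambda = 0$ forces $\boldsymbol{\eta} = 0$; and since the $x_{k}$ in Definition \ref{NTGRDEF} (1) form a subset, hence are pairwise distinct, the collected points in your conversion are legitimately distinct as condition (2) requires. Your closing observation is the one genuine subtlety and you have it exactly right: weak inequalities survive the rational-approximation limit in $(3)\Rightarrow(2)$ but strict inequalities would not, which is precisely why strict $p$-negative type is characterized through strict inequality in (\ref{wgr}) rather than (\ref{grinq}) and must be routed through the identity underlying $(1)\Leftrightarrow(2)$; this is the content of \cite[Remark 2.5]{Dou}. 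What the paper's citation buys is brevity; what your proof buys is a complete elementary derivation with the correct placement of the strictness argument.
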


The equivalence of conditions (1) --- (3) in Theorem \ref{LTWthm} is due to Lennard \textit{et al}.\ \cite[Theorem 2.4]{Ltw}.
The statement concerning strict $p$-negative type is due to Doust and Weston \cite[Remark 2.5]{Dou}. An inequality
of the form (\ref{wgr}) that holds at equality will be called a \textit{$p$-polygonal equality}.
The importance of $p$-polygonal equalities is that they correspond in a very precise way to
the non-trivial instances of equality in (\ref{pneg}). Non-trivial in this context means that not all
$\eta_{1}, \ldots, \eta_{n}$ are $0$ in (\ref{pneg}). The transition between the two types of equality is
described in several places, including Weston \cite{We2}. In the case of a finite metric space $(X,d)$,
if $\wp = \mathfrak{q}(X,d) < \infty$, then $X$ admits a $\wp$-polygonal equality. This follows directly
from Corollary 4.4 and Theorem 5.4 in Li and Weston \cite{Hli}.

\section{Euclidean embeddings and strict negative type}\label{s3}
In this section we examine the interplay between
strict $p$-negative type and isometric Euclidean embeddings in the context
of finite metric spaces. This leads to interesting variants of results of Schoenberg \cite{Sc1},
Deza and Maehara \cite{Dez}, Nickolas and Wolf \cite{Nic}, Lemin \cite{Lem}, and Shkarin \cite{Shk}.

Given a finite metric space $(X,d) = (\{ x_{0}, x_{1}, \ldots, x_{n} \}, d)$ and a non-negative real number $p$ we let $D_{p}$
denote the associated $(n+1) \times (n+1)$ \textit{$p$-distance} matrix $(d^{p}(x_{j},x_{k}))$
($0 \leq j,k \leq n$) and we let $A_{p} = (a_{jk})$ denote the symmetric $n \times n$ matrix with entries
\[
a_{jk} =  \frac{1}{2} \biggl( d^{p}(x_{0}, x_{j}) + d^{p}(x_{0}, x_{k}) - d^{p}(x_{j}, x_{k}) \biggl), 1 \leq j,k \leq n.
\]
The following fundamental result of Schoenberg (which appears as Theorem 1 in \cite{Sc1}) shows that properties of the matrix
$A_{2}$ determine if $(X,d)$ can be isometrically embedded in a Euclidean space of finite dimension.

\begin{theorem}[Schoenberg \cite{Sc1}]\label{ScThm1}
Let $n > 1$ and $m \leq n$ be natural numbers. An $n + 1$ point metric space
$(X,d) = (\{ x_{0}, x_{1}, \ldots, x_{n} \}, d)$ can be isometrically embedded
in the Euclidean space $\mathbb{R}^{m}$ of dimension $m$ and no smaller dimension if and only if
the matrix $A_{2}$ is positive semi-definite and of rank $m$.
\end{theorem}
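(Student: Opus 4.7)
The plan is to reduce the claim to the classical dictionary between positive semidefinite matrices and Gram matrices of Euclidean vectors, with the distinguished point $x_{0}$ playing the role of the origin. The algebraic bridge is the identity
\[
\langle u, v \rangle = \tfrac{1}{2}\bigl(\|u\|^{2} + \|v\|^{2} - \|u-v\|^{2}\bigr),
\]
valid in any inner-product space. Under the substitution $u = w_{j}$, $v = w_{k}$ with $w_{0} = 0$, this identity exactly mirrors the formula defining the entries $a_{jk}$ of $A_{2}$, so the proof collapses into linear algebra.

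For the necessity direction I would suppose $(X,d)$ embeds isometrically into $\mathbb{R}^{m}$ but into no smaller Euclidean space. After a translation I may assume $x_{0}$ is sent to the origin; write $w_{1}, \ldots, w_{n}$ for the images of $x_{1}, \ldots, x_{n}$. The identity above yields $a_{jk} = \langle w_{j}, w_{k}\rangle$, so $A_{2}$ is the Gram matrix of $w_{1},\ldots,w_{n}$. Gram matrices are automatically positive semidefinite, and their rank equals the dimension of the linear span of the underlying vectors. By the minimality of $m$ this span is $m$-dimensional, and therefore the rank of $A_{2}$ is $m$.

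For sufficiency, assume $A_{2}$ is positive semidefinite of rank $m$. Using the spectral decomposition I would factor $A_{2} = V^{T}V$ with $V$ an $m \times n$ matrix of rank $m$, let $v_{1}, \ldots, v_{n}$ denote the columns of $V$, and set $v_{0} := 0$. Then $\langle v_{j}, v_{k}\rangle = a_{jk}$ by construction, and running the polarization identity in reverse together with the defining formula for $a_{jk}$ yields $\|v_{j} - v_{k}\|^{2} = d(x_{j},x_{k})^{2}$ for all $0 \leq j,k \leq n$. Hence $x_{j} \mapsto v_{j}$ is an isometric embedding into $\mathbb{R}^{m}$, and minimality is automatic: any isometric embedding into $\mathbb{R}^{r}$ with $r < m$ would, by the necessity half, force the rank of $A_{2}$ to be at most $r$, a contradiction.

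The argument is essentially a verification once the Gram-matrix correspondence is invoked, so I do not foresee a deep obstacle. The one point that merits mild care is the rank count in the sufficiency step, where one needs to know that a positive semidefinite matrix of rank $m$ admits a factorization $V^{T}V$ with $V$ itself of rank $m$; this is obtained from the spectral decomposition of $A_{2}$ by truncating to its $m$ strictly positive eigenvalues and taking $V$ to be the corresponding square-root factor, after which $\operatorname{rank}(V^{T}V)=\operatorname{rank}(V)$ closes the loop.
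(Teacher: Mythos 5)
Your proof is correct, and it is essentially the classical argument: the paper gives no proof of Theorem \ref{ScThm1}, quoting it directly from Schoenberg \cite{Sc1}, whose original proof rests on exactly the correspondence you use between the quadratic form $A_{2}$ and Gram matrices of vectors with $x_{0}$ at the origin. All the key steps check out — the polarization identity gives $a_{jk} = \langle w_{j}, w_{k} \rangle$, the diagonal entries $a_{jj} = d(x_{0},x_{j})^{2}$ make the reverse computation $\|v_{j}-v_{k}\|^{2} = a_{jj} + a_{kk} - 2a_{jk} = d(x_{j},x_{k})^{2}$ work (including distances to $v_{0}=0$), and the rank-$m$ factorization $A_{2} = V^{T}V$ via the truncated spectral decomposition correctly settles minimality in both directions.
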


With these ideas in mind we now recast the notion of strict $p$-negative type for finite metric spaces.

\begin{lemma}\label{posdef}
Let $n > 1$ be an integer and let $(X,d) = (\{ x_{0}, x_{1}, \ldots, x_{n} \}, d)$ be an $n+1$ point metric space.
For each real number $p > 0$ the following conditions are equivalent:
\begin{enumerate}
\item $(X,d)$ has strict $p$-negative type.

\item The matrix $A_{p}$ is positive definite.
\end{enumerate}
\end{lemma}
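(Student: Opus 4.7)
The strategy is direct: parametrize the hyperplane $\{\boldsymbol{\xi} \in \mathbb{R}^{n+1} : \xi_0 + \xi_1 + \cdots + \xi_n = 0\}$ by $(\eta_1, \ldots, \eta_n) \in \mathbb{R}^n$ via the substitution $\eta_0 := -(\eta_1 + \cdots + \eta_n)$, and then verify an algebraic identity that converts the quadratic form defining $p$-negative type into the quadratic form $\boldsymbol{\eta}^{T} A_{p} \boldsymbol{\eta}$.

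First I would fix $\boldsymbol{\eta} = (\eta_1, \ldots, \eta_n) \in \mathbb{R}^n$, set $\eta_0 := -\sum_{k=1}^{n} \eta_k$, and compute
\[
\sum_{0 \leq i,j \leq n} d(x_i,x_j)^{p} \eta_i \eta_j = \sum_{1 \leq i,j \leq n} d(x_i,x_j)^{p} \eta_i \eta_j + 2 \eta_0 \sum_{j=1}^{n} d(x_0,x_j)^{p} \eta_j,
\]
using $d(x_0,x_0) = 0$. Substituting the expression for $\eta_0$ and symmetrizing in the indices $j,k$ in the resulting double sum yields
\[
\sum_{0 \leq i,j \leq n} d(x_i,x_j)^{p} \eta_i \eta_j = \sum_{1 \leq j,k \leq n} \eta_j \eta_k \bigl[ d(x_j,x_k)^{p} - d(x_0,x_j)^{p} - d(x_0,x_k)^{p} \bigr] = -2 \, \boldsymbol{\eta}^{T} A_{p} \boldsymbol{\eta},
\]
by the very definition of the entries $a_{jk}$.

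Next I would observe that the linear map $\Phi : \mathbb{R}^n \to \mathbb{R}^{n+1}$ defined by $\Phi(\eta_1,\ldots,\eta_n) = (-\sum_{k=1}^n \eta_k, \eta_1, \ldots, \eta_n)$ is a bijection onto the hyperplane of $(n+1)$-tuples summing to zero, and $\Phi(\boldsymbol{\eta}) = \mathbf{0}$ if and only if $\boldsymbol{\eta} = \mathbf{0}$. Combining this with the identity above, the condition that $\sum_{0 \leq i,j \leq n} d(x_i,x_j)^{p} \eta_i \eta_j < 0$ for every nonzero tuple $(\eta_0,\ldots,\eta_n)$ with $\sum \eta_i = 0$ translates exactly into $\boldsymbol{\eta}^{T} A_{p} \boldsymbol{\eta} > 0$ for every nonzero $\boldsymbol{\eta} \in \mathbb{R}^n$, proving (1) $\iff$ (2).

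There is no serious obstacle here: the proof is essentially the classical ``center one point at the origin'' trick used by Schoenberg to relate negative type to positive semi-definiteness, adapted to the strict setting. The only care required is checking that the parametrization $\Phi$ is an honest bijection and that strictness transfers faithfully under it, which follows from the observation that $\Phi$ has trivial kernel.
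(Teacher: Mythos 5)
Your proof is correct and takes essentially the same approach as the paper: both center at $x_{0}$ via the substitution $\eta_{0} = -(\eta_{1} + \cdots + \eta_{n})$ and use the identity $2\,(\boldsymbol{\eta}^{T} \cdot A_{p} \cdot \boldsymbol{\eta}) = -\boldsymbol{\eta}_{\ast}^{T} \cdot D_{p} \cdot \boldsymbol{\eta}_{\ast}$, with strictness transferring in both directions because the parametrization of the sum-zero hyperplane has trivial kernel. The only difference is cosmetic: you verify the algebraic identity explicitly (correctly, since $-2a_{jk} = d^{p}(x_{j},x_{k}) - d^{p}(x_{0},x_{j}) - d^{p}(x_{0},x_{k})$), whereas the paper states it as ``not difficult to verify'' and splits the bijection argument into its two implications.
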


\begin{proof}
Let $p > 0$ be given.

$(1) \Rightarrow (2)$ Suppose $(X,d)$ has strict $p$-negative type. If for each vector $\boldsymbol{\eta} =
(\eta_{1}, \ldots, \eta_{n})^{T} \in \mathbb{R}^{n}$ we set $\eta_{0} = - (\eta_{1} + \cdots + \eta_{n})$
and define $\boldsymbol{\eta}_{\ast} = (\eta_{0}, \eta_{1}, \ldots, \eta_{n})^{T} \in \mathbb{R}^{n+1}$, then
it is not difficult to verify that $2 (\boldsymbol{\eta}^{T} \cdot A_{p} \cdot \boldsymbol{\eta})
= - \boldsymbol{\eta}^{T}_{\ast} \cdot D_{p} \cdot \boldsymbol{\eta}_{\ast}$. By definition of strict $p$-negative type,
$\boldsymbol{\eta}^{T}_{\ast} \cdot D_{p} \cdot \boldsymbol{\eta}_{\ast} < 0$ for all non-trivial vectors
$\boldsymbol{\eta}_{\ast} \in \mathbb{R}^{n+1}$ whose coordinates sum to $0$. This implies that
$\boldsymbol{\eta}^{T} \cdot A_{p} \cdot \boldsymbol{\eta} = - \frac{1}{2} \boldsymbol{\eta}^{T}_{\ast}
\cdot D_{p} \cdot \boldsymbol{\eta}_{\ast}) > 0$ for all non-trivial vectors $\boldsymbol{\eta} \in \mathbb{R}^{n}$.
In other words, the matrix $A_{p}$ is positive definite.

$(2) \Rightarrow (1)$ Suppose that $A_{p}$ is positive definite.
Consider a non-trivial vector $\boldsymbol{\eta}_{\ast} = (\eta_{0}, \eta_{1}, \ldots,
\eta_{n})^{T}$ in $\mathbb{R}^{n+1}$ whose coordinates sum to $0$. Then it follows that the vector
$\boldsymbol{\eta} = (\eta_{1}, \ldots, \eta_{n})^{T} \in \mathbb{R}^{n}$ is also non-trivial and that
$\eta_{0} = - (\eta_{1} + \cdots + \eta_{n})$. As before, we see that
$- \boldsymbol{\eta}^{T}_{\ast} \cdot D_{p} \cdot \boldsymbol{\eta}_{\ast}
= 2 (\boldsymbol{\eta}^{T} \cdot A_{p} \cdot \boldsymbol{\eta}) > 0$, which shows that $(X,d)$ has strict $p$-negative type.
\end{proof}

Results of Li and Weston \cite{Hli} show that a finite metric space has strict $p$-negative type
if and only if it has $q$-negative type for some $q > p$. An important consequence of Lemma \ref{posdef}
is that it provides for a version of Theorem \ref{ScThm1} that is predicated in terms of strict $2$-negative type.

\begin{theorem}\label{embed}
Let $n > 1$ be an integer. For an $n + 1$ point metric space $(X,d) = (\{ x_{0}, x_{1}, \ldots, x_{n} \}, d)$,
the following conditions are equivalent:
\begin{enumerate}
\item[(1)] $(X,d)$ has strict $2$-negative type.

\item[(2)] $(X,d)$ can be isometrically embedded in the Euclidean space $\mathbb{R}^{n}$ of dimension
$n$ but it cannot be isometrically embedded in any Euclidean space $\mathbb{R}^{r}$ of dimension $r< n$.

\item[(3)] There exists an isometry $\phi: (X, d) \rightarrow \mathbb{R}^{n}$ such that the set
$$\{ \phi(x_{1}) - \phi(x_{0}), \phi(x_{2}) - \phi(x_{0}), \ldots, \phi(x_{n}) - \phi(x_{0})\}$$
is linearly independent.
\end{enumerate}
\end{theorem}

\begin{proof} Let $n > 1$ be an integer and let $(X,d) = (\{ x_{0}, x_{1}, \ldots, x_{n} \}, d)$
be an $n+1$ point metric space. We will use the same notation that was employed in the statement
and proof of Lemma \ref{posdef}.

(1) $\Rightarrow$ (2) Suppose that $(X,d)$ has strict $2$-negative type. By Lemma \ref{posdef}, the
matrix $A_{2}$ is positive definite, and so it must be of rank $n$. Therefore $(X,d)$ can be isometrically
embedded in the Euclidean space $\mathbb{R}^{n}$ of dimension $n$ but it cannot be isometrically
embedded in any Euclidean space $\mathbb{R}^{r}$ of dimension $r< n$ by Theorem \ref{ScThm1}.
(See also Theorem 2.1, Corollary 2.2 and Theorem 2.4 in Wells and Williams \cite{Wel}.)

(2) $\Rightarrow$ (1) Suppose $(X,d)$ can be isometrically embedded in the Euclidean space $\mathbb{R}^{n}$
of dimension $n$ but it cannot be isometrically embedded in any Euclidean space $\mathbb{R}^{r}$ of
dimension $r< n$. By Theorem \ref{ScThm1}, the matrix $A_{2}$ is positive semi-definite and of full rank.
So, in fact, $A_{2}$ is positive definite. Thus $(X,d)$ has strict $2$-negative type by Lemma \ref{posdef}.

The equivalence of (2) and (3) is plain.
\end{proof}

The condition (3) in the statement of Theorem \ref{embed} may be restated in the following more elegant way:
$(X,d)$ is isometrically embeddable into some Euclidean space as an affinely independent set.
Thus, bearing in mind that the transform $d^{p/2}$ is a metric on $X$ provided $0 \leq p \leq 2$, we
obtain the following immediate corollary of Theorem \ref{embed}.

\begin{corollary}\label{cor3.5}
Let $0 \leq p \leq 2$. A finite metric space $(X,d)$ has strict $p$-negative type if and only if
$(X,d^{p/2})$ is isometrically embeddable into some Euclidean space as an affinely independent set.
\end{corollary}

It is possible to use Corollary \ref{cor3.5} in conjunction with existing theory to identify
large classes of finite metric spaces that have strict $p$-negative type for a given $p$, $0 < p \leq 2$.
For example, the next corollary says that almost equilateral simplices necessarily have strict $2$-negative type.

\begin{corollary}\label{cor3.75}
Let $n > 1$ be an integer. If $(X,d) = (\{ x_{0}, x_{1}, \ldots, x_{n} \}, d)$ is an $n+1$ point
metric space with all non-zero distances satisfying $1 \leq d(x_{j}, x_{i}) \leq \gamma_{n}$ where
\[
\gamma_{n} =
\begin{cases}
\sqrt{1 + \frac{2n + 1}{n^{2} - 2}} & \text{if $n$ is even} \\
\sqrt{1 + \frac{2}{n - 1}} & \text{if $n$ is odd,}
\end{cases}
\]
then $(X,d)$ has strict $2$-negative type.
\end{corollary}

\begin{proof}
Dekster and Wilker \cite{Dek} have shown that any such finite metric space may isometrically embedded
into $\ell_{2}^{(n)}$ as an affinely independent set. The result now follows from Corollary \ref{cor3.5}
in the case $p = 2$.
\end{proof}

Every finite metric space has $p$-negative type for some $p > 0$. In fact, if $(X,d)$ is an
$n + 1$ point metric space for some integer $n > 1$, then there is a positive constant
$\wp = \wp(n) > 0$, depending only on $n$, such that $(X,d)$ has $\wp$-negative type.
This result was obtained independently by Deza and Maehara \cite{Dez} (Theorem 3) and
Weston \cite{We1} (Theorem 4.3). Indeed, it can be inferred from \cite{Dez} that we may take
$\wp = \log_{2}(1 + 1/n)$. Moreover,
as strict negative type holds on intervals by Theorem 5.4 in Li and Weston \cite{Hli},
we may thus deduce the following theorem from Theorem \ref{embed} and Corollary \ref{cor3.5}.

\begin{theorem}\label{thm3.75}
Let $n > 1$ be an integer and let $\wp = \log_{2}(1 + 1/n)$. If $(X,d)$ is an $n + 1$ point metric
space and if $0 \leq p < \wp$, then the metric space $(X, d^{p/2})$ can be isometrically embedded
in the Euclidean space $\mathbb{R}^{n}$ of dimension $n$ but it cannot be isometrically embedded
in any Euclidean space $\mathbb{R}^{r}$ of dimension $r< n$.
\end{theorem}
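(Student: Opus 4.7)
The plan is to derive this by concatenating three results already available in the paper; no new inequality needs to be established. First, since $(X,d)$ has exactly $n+1$ points, the theorem of Deza and Maehara (Theorem 3 in \cite{Dez}, quoted in the paragraph preceding the statement) guarantees that $(X,d)$ has $\wp$-negative type with $\wp = \log_{2}(1 + 1/n)$. Second, the interval property of strict $p$-negative type for finite metric spaces, due to Li and Weston \cite{Hli} and recalled immediately after Lemma \ref{posdef} (``a finite metric space has strict $p$-negative type if and only if it has $q$-negative type for some $q > p$''), may be applied with $q = \wp$: for every $p$ with $0 \leq p < \wp$ we deduce that $(X,d)$ has strict $p$-negative type.

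Third, in order to invoke Corollary \ref{cor3.5} we need to know that $p$ lies in $[0,2]$. Since $\wp = \log_{2}(1 + 1/n) \leq \log_{2}(3/2) < 1$ for every integer $n > 1$, the range $[0,\wp)$ is automatically contained in $[0,2]$, so the corollary applies. It then converts the strict $p$-negative type of $(X,d)$ into precisely the desired statement about the metric transform $(X, d^{p/2})$: that it embeds isometrically in $\mathbb{R}^{n}$ but in no Euclidean space $\mathbb{R}^{r}$ of dimension $r < n$.

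In short, the argument is a three-line concatenation: Deza--Maehara yields $\wp$-negative type, the Li--Weston interval result upgrades this to strict $p$-negative type for $p < \wp$, and Corollary \ref{cor3.5} converts strict $p$-negative type into the embedding dichotomy. There is no real obstacle here; the only point requiring care is the uniform bound $\wp < 2$ that makes Corollary \ref{cor3.5} applicable throughout the full interval $[0,\wp)$, and the standard convention that the case $p = 0$ corresponds to the discrete metric $d^{0}$ on $n+1$ points, which indeed embeds in $\mathbb{R}^{n}$ (as vertices of a regular $n$-simplex) but in no smaller Euclidean dimension.
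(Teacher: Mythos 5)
Your proposal is correct and follows essentially the same route as the paper, which deduces Theorem \ref{thm3.75} precisely by combining the Deza--Maehara bound $\wp = \log_{2}(1 + 1/n)$ from \cite{Dez}, the Li--Weston interval property for strict negative type from \cite{Hli}, and Corollary \ref{cor3.5}. Your additional checks---that $\wp \leq \log_{2}(3/2) < 2$ keeps $p$ in the range where Corollary \ref{cor3.5} applies, and the explicit treatment of $p = 0$---are sound and merely make explicit what the paper leaves implicit.
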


\begin{remark}
Theorem \ref{thm3.75} is a version of Corollary 3 in Deza and Maehara \cite{Dez}
but with additional control over the nature of the isometry. For technical reasons which we
shall not elaborate here, there is no expectation that the value
$\wp(n) = \log_{2}(1 + 1/n)$ used in the statement of Theorem \ref{thm3.75} is optimal.
It is a challenging open problem to compute $\sup \wp$ where the supremum is taken over all $\wp = \wp(n) > 0$
for which Theorem \ref{thm3.75} holds. This relates to the conjecture in Section $5$ of \cite{Dez}.
In the restricted case of an $n + 1$ point metric tree endowed with the usual graph metric, Theorem
\ref{thm3.75} will necessarily hold for a value of $\wp = \wp(n) > 1$ that only depends on $n$. This follows
from Corollary \ref{cor3.5} and Corollary 5.5 in Doust and Weston \cite{Dou}. The best known value of $\wp(n)$
for unweighted $n + 1$ point metric trees is provided by Corollary 3.4 in \cite{Hli}.
Another important aspect of Corollary \ref{cor3.5} and Theorem \ref{thm3.75} arises in situations
where $p = 1$. This is the case of strict $1$-negative type. For example, we obtain a version
of Theorem 3.4 (3) in Nickolas and Wolf \cite{Nic} by setting $p = 1$ in Corollary \ref{cor3.5}.
However, Theorem 3.4 (3) in \cite{Nic} provides additional measure theoretic information.
\end{remark}

There is a neat way to apply Theorem \ref{thm3.75} to one of the fundamental problems of distance geometry;
namely, the problem of realizing graphs in Euclidean spaces.
Let $G = (V, E)$ be a finite simple connected graph whose edges are weighted by some function $d: E \rightarrow (0, \infty)$
and let $k > 1$ be an integer. Recall that, by definition, a \textit{realization of the graph $G$ in $\ell_{2}^{(k)}$}
is a map $\varkappa : V \rightarrow \ell_{2}^{(k)}$ such that
\begin{eqnarray}\label{dg:prob}
\forall\, \{ u,v \} \in E,\,\,\, d(\{ u,v \}) & = & \| \varkappa(u) - \varkappa(v) \|_{2}.
\end{eqnarray}
It is worth stressing that such a function $\varkappa$ is not an embedding in any strict topological sense.
The notion of graph realization is only predicated upon the lengths of the edges $\{ u,v \} \in E$.
Now assume that $n = |G| > 2$ and suppose that $d(\{ u,v \}) = 1$ for each $\{ u,v \} \in E$. By Theorem \ref{thm3.75},
there is a $p > 0$ such that $(G, d^{p/2})$ can be isometrically embedded into $\ell_{2}^{(n-1)}$ as an affinely
independent set. In particular, as $1^{p/2} = 1$, we see immediately that $G$ may be realized in $\ell_{2}^{(n-1)}$.
In summary, we have obtained the following result.

\begin{corollary}
Let $G$ be a finite simple connected graph endowed with the ordinary graph metric and suppose that $n = |G| > 2$.
Then the graph $G$ may be realized as an affinely independent set in $\ell_{2}^{(n-1)}$.
\end{corollary}

It is interesting to apply the theory discussed in this section to certain Cayley graphs.
The coarse Baum-Connes conjecture postulates an algorithm for computing the higher indices of
generalized elliptic operators on non-compact spaces \cite{Hig}.
Any finitely generated group that admits a Cayley graph of positive generalized roundness satisfies the coarse
Baum-Connes conjecture (and thus the strong Novikov conjecture) by a result of Lafont and Prassidis \cite{Laf}.
And any finite or additive Cayley graph has positive generalized roundness by \cite{Dez, We1} or by
Proposition \ref{addone} (in the next section). These considerations lead to the following corollary.

\begin{corollary}
Every finitely generated group $\Gamma$ that admits a finite or additive Cayley graph satisfies the
coarse Baum-Connes conjecture.
\end{corollary}

We complete this section with one final observation about affinely independent Euclidean embeddings.
In the case of finite metric spaces there is another way to think about strict $2$-negative type.
According to Theorem \ref{embed}, an $n + 1$ point metric space $(X,d) = (\{ x_{0}, x_{1}, \ldots , x_{n} \}, d)$
has strict $2$-negative type if and only if it can be isometrically embedded into $\ell_{2}^{(n)}$ as an affinely
independent set. The latter condition has been studied classically and may be characterized in terms of Cayley-Menger
determinants. The theory of Cayley-Menger determinants is due to Menger \cite{Me1}. (See also Blumenthal \cite{Bl2}.)
Given that $D_{2} = (d^{2}(x_{i},x_{j}))$ denotes the $2$-distance matrix of $(X,d)$,
the \textit{Cayley-Menger determinant} of $(X,d)$ is defined as follows:
\[
\text{CM}\det(\{x_{0}, \ldots, x_{n}\}, d) =
\left|
\begin{matrix}
 & & & 1 \\
 & D_{2} & & \vdots \\
 & & & 1 \\
1 & \cdots & 1 & 0
\end{matrix}
\right|
\]

Menger \cite{Me1} used these determinants to characterize condition (3) of Theorem \ref{embed} as follows.

\begin{theorem}[Menger \cite{Me1}]\label{menger}
Let $(X,d) = (\{ x_{0}, x_{1}, \ldots , x_{n} \}, d)$ be an $n + 1$ point metric space $(n > 1)$.
Then the following conditions are equivalent:
\begin{enumerate}
\item[(1)] $(X,d)$ can be isometrically embedded into $\ell_{2}^{(n)}$ as an affinely independent set.

\item[(2)] $(-1)^{k+ 1} \text{CM}\det(\{x_{0}, \ldots, x_{k}\}, d) > 0$ for all $k = 1, \ldots, n$.
\end{enumerate}
\end{theorem}

As noted, condition (1) in Theorem \ref{menger} merely says that $(X,d)$ has strict $2$-negative type.
It is also clear that we may alter the definition of the Cayley-Menger determinant by replacing the
$2$-distance matrix $D_{2}$ with the $p$-distance matrix $D_{p}$ ($0 \leq p \leq 2$). This is the same
thing as replacing $d$ with $d^{p/2}$ in the statement of Theorem \ref{menger}. It follows that we may
use Corollary \ref{cor3.5} to formulate a characterization of finite metric spaces of strict $p$-negative
type in terms of Cayley-Menger determinants ($0 \leq p \leq 2$). We omit the details.

\section{Roundness properties of additive and non-ultrametric leaf metric spaces}\label{s4}

In this section we examine roundness properties of additive metric spaces with a particular
emphasis on certain leaf metric spaces which are not ultrametric. The starting point is an elementary
statement about the roundness and generalized roundness of additive metric spaces.
This result is essentially folklore at this time but we include a short proof for completeness.

\begin{proposition}\label{addone}
Every additive metric space has generalized roundness $\geq 1$ and roundness $\geq 2$. Moreover, there exist
additive metric spaces of generalized roundness exactly $1$ and roundness exactly $2$.
\end{proposition}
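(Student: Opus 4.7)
The plan is to establish the universal lower bounds on roundness and generalized roundness via short computations exploiting the tree-metric structure of finite additive spaces (Buneman's criterion), and then to exhibit the \emph{infinite star} as an explicit example that saturates both bounds simultaneously.

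For roundness $\geq 2$, I fix four points in an additive space and abbreviate the diagonal distances as $A, B$ and the four side distances as $C, D, E, F$; the goal is $A^2 + B^2 \leq C^2 + D^2 + E^2 + F^2$. Since a four-point subspace of an additive space is itself additive, Buneman's criterion realises it as a sub-tree of a metric tree, so it suffices to treat the star and caterpillar topologies. In the star case, writing $d(p_i, p_j) = r_i + r_j$ for the leg-lengths and expanding both sides, $C^2 + D^2 + E^2 + F^2 - A^2 - B^2$ collapses to $(r_a - r_c)^2 + (r_b - r_d)^2 + 2(r_a + r_c)(r_b + r_d) \geq 0$. The caterpillar case is an analogous algebraic identity with one subtlety: when the chosen diagonals straddle the spine instead of matching its natural bipartition of leaves, a cross-term in the expansion is negative, and one has to invoke $r_i + r_j \geq |r_i - r_j|$ to bundle the remaining terms into a perfect square.

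For generalized roundness $\geq 1$, inequality (\ref{grinq}) involves only finitely many points, so we may pass to a finite (hence tree) subspace. Any tree metric decomposes as $d = \sum_e w_e \, d_e$, where $w_e > 0$ is the length of edge $e$ and $d_e(x, y) = 1$ precisely when $x$ and $y$ lie on opposite sides of $e$. Since the inequality at $p = 1$ is linear in $d$, it suffices to verify it for a single cut metric $d_A$. Letting $\alpha$ and $\beta$ denote the number of $a_k$'s and $b_k$'s lying in $A$ (counted with multiplicity), a direct count shows the right-hand side minus the left-hand side of (\ref{grinq}) equals $(\alpha - \beta)^2 \geq 0$.

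For the sharpness, take the infinite star $X = \{c\} \cup \{\ell_n : n \in \mathbb{N}\}$ with $d(c, \ell_n) = 1$ and $d(\ell_m, \ell_n) = 2$ for $m \neq n$. Every four-point subset of $X$ yields three equal pair-sums, so $X$ is additive, and it is not ultrametric since $d(\ell_1, \ell_2) = 2 > 1 = \max\{d(\ell_1, c), d(\ell_2, c)\}$. For roundness $\leq 2$, I exploit that the quadruple in Definition \ref{roundness} admits repetitions: taking $x_{00} = x_{11} = c$ and $x_{01} = \ell_1,\, x_{10} = \ell_2$ collapses (\ref{r1}) to $2^p \leq 4$, forcing $p \leq 2$. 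For generalized roundness $\leq 1$, I invoke the equivalence with $p$-negative type in Theorem \ref{LTWthm}: on $\{c, \ell_1, \ldots, \ell_n\}$ with weights $\eta_c = -n$ and $\eta_{\ell_i} = 1$ (summing to zero), one computes $\sum d^p \eta_i \eta_j = n(n-1) 2^p - 2n^2$, which is strictly positive for every $n$ larger than $1 + 2/(2^p - 2)$ whenever $p > 1$. Thus $p \notin \mbox{gr}(X)$ for any $p > 1$, completing the argument. The main technical obstacle is the case analysis for roundness $\geq 2$ in the caterpillar topology when the diagonals do not align with the bipartition, since the sign of the resulting cross-term requires careful bounding to close the identity.
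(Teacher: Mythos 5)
Your proposal is correct, and it reaches the same conclusions by a genuinely more self-contained route than the paper. For the two universal bounds, the paper simply cites known results: roundness $\geq 2$ for metric trees is the Linial--Naor observation (Proposition 2 in \cite{Nao}), and generalized roundness $\geq 1$ rests on the ``well known'' fact that finite metric trees have $1$-negative type; you instead reprove both from scratch after the same Buneman reduction \cite{Bun}. Your star-case identity is right, and your sketched caterpillar case does close as claimed: with legs $a,b$ on one side of a spine of length $m$ and $c,d$ on the other, the straddling labeling leaves a deficit of $(a+b)^{2}+(c+d)^{2}+2(b-a)(c-d)\geq\bigl((b-a)+(c-d)\bigr)^{2}\geq 0$, exactly the $r_{i}+r_{j}\geq|r_{i}-r_{j}|$ bundling you describe (degenerate topologies are covered by allowing zero leg or spine lengths, which you should say explicitly). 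Your cut-metric decomposition with the $(\alpha-\beta)^{2}$ count is the standard proof of $1$-negative type for tree metrics and is computed correctly. For sharpness the divergence is larger: the paper uses the complete binary tree $B_{\infty}$, getting roundness exactly $2$ from metric midpoints (the same repetition trick you use, with $x_{00}=x_{11}$ a midpoint) and generalized roundness exactly $1$ by citing Theorem 2.2 of Caffarelli \emph{et al}.\ \cite{Caf}, whereas your infinite star handles both bounds in one example with elementary computations --- your negative-type violation $n(n-1)2^{p}-2n^{2}>0$ for $n>1+2/(2^{p}-2)$ is correct and, via Theorem \ref{LTWthm}, shows no $p>1$ is a generalized roundness exponent. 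What each approach buys: the paper's is shorter and records that even a uniformly discrete, geodesically branching tree attains the extremes, while yours avoids the external machinery of \cite{Caf} entirely and is fully verifiable within the paper's own toolkit; note also that your star example foreshadows the $(L_{b,k},d)$ spaces of Definition \ref{leafy}, being essentially their $b\to\infty$, $k\to\infty$ limit, consistent with Proposition \ref{Lmet} giving $\mathfrak{q}\to 1$.
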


\begin{proof}
Let $(X,d)$ be an additive metric space. Every finite metric subspace of $(X,d)$ is additive and therefore
embeds isometrically into a finite metric tree by Buneman's criterion (\cite[Theorem 2]{Bun}). However, it
well-known that all finite metric trees have $1$-negative type and hence generalized roundness at least $1$.
Thus the finite subspaces of $(X,d)$ are all of generalized
roundness at least $1$. This implies that $(X,d)$ is of generalized roundness at least $1$.
Similarly, every quadruple of points $x_{00}, x_{01}, x_{11}, x_{10} \in X$ embeds isometrically into a
finite metric tree by Buneman's criterion and, moreover, it is an observation of Linial and
Naor that $2$ is a roundness exponent of every metric tree. (The proof is given in Naor and Schechtman
\cite[Proposition 2]{Nao}.) Thus $(X,d)$ is of roundness at least $2$.

The complete binary tree $B_{\infty}$ of depth $\infty$ endowed with the usual path metric
has roundness at least $2$ by Proposition 2 in \cite{Nao} but it also contains segments with metric midpoints.
Thus $B_{\infty}$ has roundness exactly $2$. Moreover, $B_{\infty}$ has generalized roundness exactly $1$
by Theorem 2.2 in Caffarelli \textit{et al}.\ \cite{Caf}.
\end{proof}

\begin{corollary}\label{addtwo}
Every finite additive metric space has generalized roundness $> 1$.
\end{corollary}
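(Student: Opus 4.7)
The plan is to strengthen the inequality $\mathfrak{q}(X,d) \geq 1$ already provided by Proposition \ref{addone} by showing that every finite additive metric space actually has \emph{strict} $1$-negative type, and then invoking the fact that strict negative type propagates to an interval of exponents.

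First, by Buneman's criterion \cite[Theorem 2]{Bun}, any finite additive metric space $(X,d)$ embeds isometrically into a finite metric tree $(T,E,d_T)$. I would then cite the established fact that every finite metric tree has strict $1$-negative type (see, for example, Hjorth et al.\ \cite{Hj1, Hj2} and Doust and Weston \cite{Dou}). Strict $p$-negative type is inherited by isometric subspaces, because any non-trivial mean-zero weighting on the points of $X$ is, tautologically, a non-trivial mean-zero weighting on the points of $T$ supported in $X$. Consequently $(X,d)$ itself has strict $1$-negative type.

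To convert this into the desired strict lower bound on $\mathfrak{q}(X,d)$, I would invoke Theorem 5.4 in Li and Weston \cite{Hli}, which asserts that a finite metric space with strict $p$-negative type automatically has $q$-negative type for some $q > p$. Applying this with $p = 1$ gives $q$-negative type for some $q > 1$, and then the equivalence of $q$-negative type with $q \in \mbox{gr}(X,d)$ provided by Theorem \ref{LTWthm} yields $\mathfrak{q}(X,d) \geq q > 1$, as required.

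The main obstacle is the first ingredient, the strict $1$-negative type of finite metric trees, which is not re-proved in this excerpt and must be drawn from the cited literature; every subsequent step is an immediate consequence of results already recorded in Sections \ref{s2} and \ref{s3}. An alternative route that avoids leaning on the deeper Li-Weston interval theorem would be to verify the strict $1$-polygonal inequality directly on a finite metric tree by exploiting the unique geodesic structure (so that any putative equality case forces a mass-cancellation that is incompatible with all weights being positive), but for a concise proof the route above through \cite{Hli} and Theorem \ref{LTWthm} is preferable.
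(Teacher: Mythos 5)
Your proof is correct and takes essentially the same route as the paper's: Buneman's criterion to embed $(X,d)$ into a finite metric tree, the strict negative type results of Doust--Weston and Li--Weston, and the equivalence of $q$-negative type with $q \in \mbox{gr}(X,d)$ (Theorem \ref{LTWthm}) to conclude $\mathfrak{q}(X,d) > 1$. The only cosmetic difference is that you establish strict $1$-negative type on $X$ itself and then upgrade via Li--Weston's interval theorem, whereas the paper directly quotes that every finite metric tree has strict $q$-negative type for some $q > 1$ depending only on $|T|$ and lets $X$ inherit that exponent.
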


\begin{proof}
Each finite metric tree $(T,d)$ has strict $q$-negative type for some $q > 1$ that depends on $|T|$ only.
(See Theorem 5.4 in Doust and Weston \cite{Dou} or Corollary 4.5 (b) in Li and Weston \cite{Hli}.)
The result now follows from Buneman's criterion and Theorem 2.4 in Lennard \textit{et al}.\ \cite{Ltw}.
\end{proof}

Proposition \ref{addone} shows that the generalized roundness of an additive metric space
must belong to $[1, \infty]$. We will see in the next section (Theorem \ref{Mathav})
that an additive metric space that is not ultrametric must have finite generalized roundness.
So it follows from Corollary \ref{addtwo} that the generalized roundness of a finite additive
metric space that is not ultrametric must belong to the interval $(1, \infty)$. Our first theorem in this
section shows that given any $\wp \in (1, \infty)$ it is possible to construct a finite leaf metric
space whose generalized roundness is exactly $\wp$. The class of metric spaces that we introduce
for this purpose is defined as follows.

\begin{definition}\label{leafy}
Let $b > 1$ be a real number and let $k \geq 2$ be an integer.
The complete bipartite graph $K_{1, k+1}$ is a star with $k+1$ leaves. We define a weighted graph
metric $d$ on $K_{1, k+1}$ by allowing $k$ edges in the star to have length $b/(b+1)$ while the remaining
edge in the star is taken to have length $1/(b+1)$. We then let $(L_{b,k}, d)$ denote the metric subspace
of $(K_{1, k+1}, d)$ that consists of the $k+1$ leaves of $K_{1, k+1}$ only. The non-zero distances in
this leaf metric space are $1$ and $z = 2b/(b+1)$.
\end{definition}

The following proposition computes the generalized roundness of the leaf metric space $(L_{b,k}, d)$.

\begin{proposition}\label{Lmet}
Let $b > 1$ be a real number and let $k \geq 2$ be an integer.
Then the leaf metric space $(L_{b,k},d)$ has generalized roundness
$$\mathfrak{q}(L_{b,k},d) = \log_{z} \left(\frac{2k}{k-1}\right),$$
where $z =\frac{2b}{b+1}$.
\end{proposition}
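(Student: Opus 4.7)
The plan is to use the equivalence between $p$-negative type and generalized roundness provided by Theorem~\ref{LTWthm}, and work with the quadratic form in Definition~\ref{NTGRDEF}(1). Label the $k+1$ leaves of $L_{b,k}$ as $v_0, v_1, \ldots, v_k$, where $v_0$ is attached to the short edge of length $1/(b+1)$ and $v_1, \ldots, v_k$ are attached to the long edges of length $b/(b+1)$. Then $d(v_0, v_i) = 1$ for $1 \leq i \leq k$ and $d(v_i, v_j) = z$ for $1 \leq i < j \leq k$. Since there are only $k+1$ distinct points and the $p$-negative type inequality is linear in the products $\eta_i \eta_j$, I can assume a single real weight $\alpha_i$ is placed at each $v_i$, with $\alpha_0 + \alpha_1 + \cdots + \alpha_k = 0$.

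Next I would compute the quadratic form
\[
Q_p(\alpha) \;=\; \sum_{0 \leq i,j \leq k} d(v_i, v_j)^p \, \alpha_i \alpha_j
\]
by splitting it into the terms involving $v_0$ (which contribute $2\alpha_0 \sum_{i \geq 1} \alpha_i = -2\alpha_0^2$, using $\sum_{i \geq 1} \alpha_i = -\alpha_0$) and the terms among $\{v_1,\ldots,v_k\}$ (which contribute $z^p\bigl[\alpha_0^2 - \sum_{i \geq 1} \alpha_i^2\bigr]$). This gives
\[
Q_p(\alpha) \;=\; (z^p - 2)\alpha_0^2 \;-\; z^p \sum_{i=1}^{k} \alpha_i^2.
\]
Thus $(L_{b,k},d)$ has $p$-negative type if and only if $(z^p - 2)\alpha_0^2 \leq z^p \sum_{i=1}^{k} \alpha_i^2$ for every admissible weight vector.

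The analysis then splits on the sign of $z^p - 2$. If $z^p \leq 2$, the inequality is trivial. When $z^p > 2$, by homogeneity I can normalize $\alpha_0 = 1$, so that $\alpha_1 + \cdots + \alpha_k = -1$, and maximize $Q_p$ by \emph{minimizing} $\sum_{i=1}^{k} \alpha_i^2$ subject to this linear constraint. Cauchy--Schwarz shows the minimum is $1/k$, attained only when $\alpha_1 = \cdots = \alpha_k = -1/k$. Substituting gives $\max Q_p = \bigl((k-1)z^p - 2k\bigr)/k$, which is $\leq 0$ precisely when $z^p \leq 2k/(k-1)$, i.e.\ when $p \leq \log_z\!\bigl(2k/(k-1)\bigr)$.

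This single computation yields both directions simultaneously: the inequality holds for all admissible $\alpha$ at $p_0 := \log_z\!\bigl(2k/(k-1)\bigr)$, so $p_0 \in gr(L_{b,k},d)$ by Theorem~\ref{LTWthm}, while the explicit extremal configuration $\alpha_0 = 1$, $\alpha_1 = \cdots = \alpha_k = -1/k$ witnesses failure for any $p > p_0$. Hence $\mathfrak{q}(L_{b,k},d) = p_0$. I do not expect any real obstacle here; the argument is a direct diagonalization of a very small quadratic form, and the only thing to keep track of is the constrained optimization via Cauchy--Schwarz. The fact that the supremum is attained is consistent with Lemma~\ref{c2}(2), which applies since $b > 1$ and $k \geq 2$ force $z > 1$ and therefore $p_0 < \infty$.
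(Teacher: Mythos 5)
Your proof is correct, but it takes a genuinely different route from the paper's. The paper applies S\'{a}nchez's criterion (Corollary 2.4 in \cite{San}): it first checks that the $p$-distance matrix $D_p$ is invertible for every $p \geq 0$, then solves the linear system $D_p \mathbf{b} = \mathbf{1}$ to obtain $\langle D_p^{-1}\mathbf{1}, \mathbf{1}\rangle = 2 - \frac{k-1}{k}z^p$, and reads off the generalized roundness as the unique zero of this function, i.e.\ $z^p = 2k/(k-1)$. You instead attack the negative type quadratic form directly: your decomposition $Q_p(\alpha) = (z^p - 2)\alpha_0^2 - z^p \sum_{i=1}^{k}\alpha_i^2$ is correct, the Cauchy--Schwarz minimization of $\sum_{i=1}^k \alpha_i^2$ subject to $\alpha_1 + \cdots + \alpha_k = -1$ correctly identifies the extremal configuration $\alpha = (1, -1/k, \ldots, -1/k)$ and the threshold $z^p \leq 2k/(k-1)$, and Theorem \ref{LTWthm} then converts the conclusion into one about $gr(L_{b,k},d)$. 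Your approach buys several things: it is self-contained (no appeal to the external Corollary 2.4 of \cite{San}), it determines the whole interval $gr(L_{b,k},d) = [0, p_0]$ explicitly rather than through the interval property, and the extremal weights furnish a concrete $p_0$-polygonal equality (the point $v_0$ with weight $1$ against $v_1, \ldots, v_k$ with weights $1/k$), exactly the witness that Lemma \ref{c2}(2) predicts must exist; in fact your extremal vector is, up to scaling, dual to the vector $D_p^{-1}\mathbf{1}$ computed in the paper. What the paper's route buys is economy and uniformity: once invertibility of $D_p$ is verified, everything reduces to solving a single linear system, and the same machine computes $\mathfrak{q}$ for other finite metric spaces without any case analysis on the sign of $z^p - 2$ or a constrained optimization. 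One cosmetic point: your normalization $\alpha_0 = 1$ ``by homogeneity'' presumes $\alpha_0 \neq 0$; you should add the one-line observation that if $\alpha_0 = 0$ then $Q_p(\alpha) = -z^p\sum_{i=1}^k \alpha_i^2 \leq 0$ trivially, so nothing is lost.
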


\begin{proof} We use a technique of S\'{a}nchez \cite{San} (Corollary 2.4) to compute generalized roundness of $(L_{b,k}, d)$.
Let $p \geq 0$ be given. An associated $p$-distance matrix for the metric space $(L_{b,k}, d)$ is given by:
$$
D_p
=
\begin{pmatrix}
0 &1 &\cdots &\cdots &1 \\
1 &\ddots &z^p &\cdots &z^p \\
\vdots &z^p &\ddots &\ddots &\vdots \\
\vdots &\vdots &\ddots &\ddots &z^p \\
1 &z^p &\cdots &z^p &0
\end{pmatrix}.
$$
We first show that the matrix $D_p$ is invertible.
Let a vector $\mathbf{a} = (a_{1}, \ldots, a_{k+1})^{T} \in \mathbb{R}^{k+1}$ such that $D_p \mathbf{a} = \mathbf{0}$ be given.
By matrix multiplication we obtain the following system of linear equations:
\begin{eqnarray*}
a_2 + a_3 + \cdots + a_{k+1} &=& 0, \\
a_1 + z^p(0 \cdot a_2 + a_3 + \cdots + a_{k+1}) &=& 0, \\
a_1 + z^p(a_2 + 0 \cdot a_3 + \cdots + a_{k+1}) &=& 0, \\
&\vdots& \\
a_1 + z^p(a_2 + a_3 + \cdots + 0 \cdot a_{k+1}) &=& 0.
\end{eqnarray*}
From all but the first equation we see that $a_2 = \cdots = a_{k+1}$.
The first equation then gives $k a_2 = 0$. As a result
$a_2 = \cdots = a_{k+1} = 0$. Then the second equation (for example) implies that
$a_1 = 0$ too. Thus $\mathbf{a} = \mathbf{0}$ and so it follows that the matrix
$D_p$ is invertible.

We now compute $\langle D_p^{-1}\mathbf{1}, \mathbf{1}\rangle = \mathbf{1}^TD_p^{-1}\mathbf{1}$,
where $\mathbf{1}$ denotes the vector whose entries are all $1$, as a function of $p$.
By setting $\mathbf{b} = D_p^{-1}\mathbf{1} = (b_{1}, \ldots, b_{k+1})^{T}$, or equivalently
$D_p\mathbf{b} = \mathbf{1}$, we may solve for $\mathbf{b}$ in the same manner as for $\mathbf{a}$
above. We obtain $b_2 = \cdots = b_{k+1} = \frac{1}{k}$
and $b_1 = 1 - z^p\left(\frac{k-1}{k}\right)$.
Hence $$\langle D_p^{-1}\mathbf{1}, \mathbf{1}\rangle
= \mathbf{1}^T \mathbf{b}
= 1 - \left(\frac{k-1}{k}\right)z^p + k \cdot \frac{1}{k} = 2 - \left(\frac{k-1}{k}\right)z^p.$$
Therefore $\langle D_p^{-1}\mathbf{1}, \mathbf{1}\rangle = 0$ if and only if $z^{p} = \frac{2k}{k-1}$.
It is now follows from Corollary 2.4 in \cite{San} that
$$\mathfrak{q}(L_{b,k},d) = \log_{z}\left(\frac{2k}{k-1}\right).$$
\end{proof}

\begin{theorem}\label{Lmet1}
For each $\wp \in (1, \infty)$ there exists a finite leaf metric space $(L,d)$ of generalized
roundness $\wp$.
\end{theorem}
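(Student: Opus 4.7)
The plan is to apply Proposition \ref{Lmet} with carefully chosen parameters. Recall that by that proposition, the two-parameter family of finite leaf metric spaces $(L_{b,k},d)$ has generalized roundness
\[
\mathfrak{q}(L_{b,k},d) = \log_{z}\!\left(\frac{2k}{k-1}\right), \qquad z = \frac{2b}{b+1}.
\]
It therefore suffices to show that as $(b,k)$ ranges over $(1,\infty) \times \{2,3,\ldots\}$, this quantity realizes every value in $(1,\infty)$.

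First, I would fix $k \geq 2$ and analyze the function $\Phi_{k}\colon (1,\infty) \to \mathbb{R}$ defined by $\Phi_{k}(b) = \log_{z(b)}(2k/(k-1))$, where $z(b) = 2b/(b+1)$. The map $b \mapsto z(b)$ is a continuous, strictly increasing bijection from $(1,\infty)$ onto $(1,2)$. Since $2k/(k-1) > 1$ and the function $z \mapsto \log_{z}(c) = (\ln c)/(\ln z)$ is continuous and strictly decreasing on $(1,\infty)$ for each fixed $c > 1$, the composition $\Phi_{k}$ is continuous and strictly decreasing on $(1,\infty)$, with
\[
\lim_{b \to 1^{+}} \Phi_{k}(b) = +\infty \qquad \text{and} \qquad \lim_{b \to \infty} \Phi_{k}(b) = \log_{2}\!\left(\frac{2k}{k-1}\right).
\]
By the intermediate value theorem, $\Phi_{k}$ surjects onto the open interval $\bigl(\log_{2}(2k/(k-1)),\, \infty\bigr)$.

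Finally, $2k/(k-1) = 2 + 2/(k-1) \to 2$ as $k \to \infty$, and therefore $\log_{2}(2k/(k-1)) \to 1^{+}$. Given any target $\wp \in (1,\infty)$, I would first choose an integer $k \geq 2$ large enough that $\log_{2}(2k/(k-1)) < \wp$, and then invoke the intermediate value theorem to locate $b > 1$ with $\Phi_{k}(b) = \wp$. Setting $(L,d) := (L_{b,k},d)$ for these parameters yields, by Proposition \ref{Lmet}, a finite leaf metric space whose generalized roundness is exactly $\wp$. There is no serious obstacle in this argument; it is essentially a parameter count resting on Proposition \ref{Lmet}, and the only point demanding explicit verification is that the sequence of infima $\log_{2}(2k/(k-1))$ decreases to $1$, which is precisely what guarantees that every $\wp > 1$ is captured by a suitable choice of $k$.
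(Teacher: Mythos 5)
Your proposal is correct and follows essentially the same route as the paper's own proof: both reduce the problem to Proposition \ref{Lmet}, observe that for fixed $k$ the function $z \mapsto \log_{z}\bigl(\tfrac{2k}{k-1}\bigr)$ has range $\bigl(\log_{2}\bigl(\tfrac{2k}{k-1}\bigr), \infty\bigr)$ as $z = 2b/(b+1)$ runs over $(1,2)$, and then take $k$ large so that $\log_{2}\bigl(\tfrac{2k}{k-1}\bigr) < \wp$. Your version merely makes explicit the monotonicity and limit computations that the paper leaves as ``easy to verify.''
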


\begin{proof}
Let $\wp \in (1, \infty)$ be given. Let $b$ and $k$ be as in the statement of Definition \ref{leafy}.
As $b \in (1,\infty)$, the continuous variable $z = 2b/(b+1)$ has range $(1,2)$.
Now consider the corresponding continuous function
\[
f_{k}(z) = \log_{z}\left(\frac{2k}{k-1}\right) = \frac{\log \bigl(\frac{2k}{k-1}\bigl)}{\log z}, z \in (1,2).
\]
It is easy to verify that $\text{Range} f_{k} = \bigl(\log_{2} \bigl(\frac{2k}{k-1}\bigl),
\infty \bigl)$. So, provided $k$ is sufficiently large, $\wp \in \text{Range} f_{k}$.
This means that we may choose a $z \in (1,2)$ such that $z^{\wp} = \frac{2k}{k-1}$. Moreover,
we may express $z$ in the form $2b/(b + 1)$ for an appropriately chosen
value of $b \in (1, \infty)$. The resulting finite leaf metric space $(L_{b,k}, d)$
has generalized roundness $\wp$ by Proposition \ref{Lmet}.
\end{proof}

\section{Classifications of ultrametric spaces according to roundness}\label{s5}

We begin this section with an unpublished theorem that is due to Mathav Murugan.
On the basis of Theorems \ref{embed} and \ref{Mathav} we then show how to recover and generalize certain
well-known isometric embedding theorems due to Kelly \cite{Ke1, Ke2}, Timan and Vestfrid \cite{Tim},
Lemin \cite{Lem}, and Shkarin \cite{Shk}.

\begin{theorem}\label{Mathav}
For a metric space $(X,d)$, the following statements are equivalent:
\begin{enumerate}
\item $X$ is ultrametric.
\item $X$ has infinite generalized roundness.
\item $X$ has infinite roundness.
\end{enumerate}
\end{theorem}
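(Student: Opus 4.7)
The plan is to close the cycle $(1) \Rightarrow (2) \Rightarrow (3) \Rightarrow (1)$. The step $(2) \Rightarrow (3)$ is immediate from Lemma \ref{c2}(1), which gives $\mathfrak{q}(X,d) \leq \mathfrak{p}(X,d)$ unconditionally, so infinite generalized roundness forces infinite roundness. The substance of the theorem therefore lies in $(1) \Rightarrow (2)$ and $(3) \Rightarrow (1)$.

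For $(1) \Rightarrow (2)$ I would exploit the stability of ultrametricity under metric transforms. Suppose $(X,d)$ is ultrametric and fix any $p > 0$. By Lemma \ref{c1}, $(X, d^{p})$ is again ultrametric, hence in particular additive, so Proposition \ref{addone} gives $1 \in \mbox{gr}(X, d^{p})$. The transfer rule in Lemma \ref{c2}(4) then promotes this to $p \cdot 1 = p \in \mbox{gr}(X, d)$. Since $p > 0$ was arbitrary, $\mathfrak{q}(X,d) = \infty$. This is the conceptual heart of the argument: the (already nontrivial) bound ``additive implies generalized roundness $\geq 1$'' is bootstrapped to $\mathfrak{q}(X,d) = \infty$ by running it through every metric transform of $d$.

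For $(3) \Rightarrow (1)$ I would argue the contrapositive. If $X$ fails to be ultrametric there exist $x, y, z \in X$ with $d(x,y) > M := \max\{d(x,z), d(y,z)\}$. Substituting the quadruple $x_{00}=x$, $x_{11}=y$, $x_{01}=x_{10}=z$ into the roundness inequality (\ref{r1}) collapses it to $d(x,y)^{p} \leq 2 d(x,z)^{p} + 2 d(y,z)^{p} \leq 4M^{p}$, which fails as soon as $p > \log 4 / \log(d(x,y)/M)$. Hence every sufficiently large $p$ is excluded from $r(X,d)$, so $\mathfrak{p}(X,d) \leq \log 4 / \log(d(x,y)/M) < \infty$. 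The main obstacle here is not really analytical but tactical: one has to recognize that $(1) \Rightarrow (2)$ is hopeless to approach by verifying (\ref{grinq}) directly for all $p$ simultaneously, and that Lemmas \ref{c1} and \ref{c2}(4) together reduce the entire statement to the already-established Proposition \ref{addone}. A minor bookkeeping item is to confirm that Definition \ref{roundness} permits repeated entries in the quadruple $(x_{00},x_{01},x_{11},x_{10})$; it does, being stated over all ordered quadruples, and the degenerate term $d(z,z)^{p} = 0$ causes no trouble.
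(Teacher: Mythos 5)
Your proposal is correct and takes essentially the same route as the paper's proof: $(1) \Rightarrow (2)$ via Lemma \ref{c1}, Proposition \ref{addone} and the transfer rule Lemma \ref{c2}(4); $(2) \Rightarrow (3)$ via Lemma \ref{c2}(1); and $(3) \Rightarrow (1)$ by the same degenerate-quadruple contrapositive ($x_{01} = x_{10}$ equal to the middle point), where your explicit bound $\mathfrak{p}(X,d) \leq \log 4 / \log(d(x,y)/M)$ is just a quantified form of the paper's argument with a sequence $p_{n} \to \infty$. The one step you should make explicit (as the paper does) is that Proposition \ref{addone} only yields the supremum statement $\mathfrak{q}(X,d^{p}) \geq 1$, and the membership claim $1 \in \mbox{gr}(X,d^{p})$ then requires the interval property of generalized roundness exponents from Remark \ref{int:rem} (or, alternatively, the observation that the proof of Proposition \ref{addone} actually establishes $1$-negative type, which gives $1 \in \mbox{gr}$ directly via Theorem \ref{LTWthm}).
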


\begin{proof} (1) $\Rightarrow$ (2) Let $(X,d)$ be an ultrametric space and suppose $p > 0$. By Lemma \ref{c1},
$(X, d^{p})$ is an ultrametric space. In particular, $(X, d^{p})$ is an
additive metric space. So, by Proposition \ref{addone} and the interval
property of generalized roundness exponents noted in Remark \ref{int:rem}, it follows that
$1 \in gr(X, d^{p})$. Thus $p \in gr(X,d)$ by Lemma \ref{c2} (3). As $p > 0$ was arbitrary, (2) holds.

(2) $\Rightarrow$ (3) By Lemma \ref{c2} (1), we have $\mathfrak{q}(X,d) \leq \mathfrak{p}(X,d)$, so if $\mathfrak{q}(X,d) = \infty$,
we must have $\mathfrak{p}(X,d) = \infty$.

(3) $\Rightarrow$ (1) We argue contrapositively. Suppose that $(X,d)$ is not an ultrametric space.
Then there must exist points $x,y,z \in X$ such that $d(x,z) > \max \{ d(x,y), d(y,z) \}$. Let
$\alpha = d(x,z), \beta = d(x,y)$ and $\gamma = d(y,z)$. By scaling the metric $d$ by $d(x,z)^{-1}$, if necessary,
we may assume that $\alpha = 1$. This forces $0 \leq \beta, \gamma < 1$. We now
define a quadruple of points $\{ x_{ij} \}$ in $X$: $x_{00} = x, x_{11} = z$ and $x_{01} = x_{10} = y$.
If $p$ is a roundness exponent of $(X,d)$ then, by considering the indicated quadruple $\{ x_{ij} \}$, it follows that
\begin{eqnarray}\label{bound}
1 \leq 2 \cdot (\beta^{p} + \gamma^{p}).
\end{eqnarray}
However, the right side of (\ref{bound}) decreases to $0$ as $p \rightarrow \infty$, so we conclude
that $p$ is bounded away from $+ \infty$. This shows that $(X,d)$ has finite roundness.
\end{proof}

On the basis of Theorems \ref{LTWthm}, \ref{embed} and \ref{Mathav} it is possible to draw a number of very
intriguing corollaries. We conclude this paper by discussing several such corollaries.

\begin{corollary}\label{cor0}
For each $\wp \in [1, \infty]$ there exists an additive metric space whose generalized roundness is $\wp$.
\end{corollary}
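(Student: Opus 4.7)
The plan is to assemble the corollary directly from the three preceding results, treating the endpoints $\wp = 1$ and $\wp = \infty$ separately from the open interval $(1,\infty)$. Since leaf metrics and ultrametrics both sit inside the class of additive metric spaces, each witness we exhibit will automatically be additive, so the only thing that needs checking in each case is the value of $\mathfrak{q}$.

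For the case $\wp = 1$, I would cite the second assertion of Proposition \ref{addone}: the complete binary tree $B_\infty$ (equipped with its path metric) is an additive metric space whose generalized roundness equals $1$. For the intermediate case $\wp \in (1,\infty)$, I would invoke Theorem \ref{Lmet1}, which constructs, for any such $\wp$, a finite leaf metric space $(L_{b,k},d)$ with $\mathfrak{q}(L_{b,k},d) = \wp$; since leaf metrics are additive (leaf metrics $\subset$ additive metrics), this supplies the desired example. For the case $\wp = \infty$, I would take any ultrametric space (for instance, any two-point metric space, or the $p$-adic integers); by Theorem \ref{Mathav} it has $\mathfrak{q} = \infty$, and ultrametrics are additive.

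There is essentially no obstacle here, since the corollary is a direct packaging of Proposition \ref{addone}, Theorem \ref{Lmet1}, and Theorem \ref{Mathav}. The only minor point to articulate in the proof is the inclusion of ultrametrics and leaf metrics within the class of additive metrics, which was already noted in Section \ref{s2}. Thus the entire proof amounts to a short case analysis on whether $\wp$ equals $1$, lies in $(1,\infty)$, or equals $\infty$, with a one-line justification pointing to the appropriate earlier result in each case.
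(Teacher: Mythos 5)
Your proposal is correct and matches the paper's own proof, which likewise derives the result ``plainly'' from Proposition \ref{addone} (for $\wp = 1$), Theorem \ref{Lmet1} (for $\wp \in (1,\infty)$), and Theorem \ref{Mathav} (for $\wp = \infty$). Your explicit case analysis and the remark that leaf metrics and ultrametrics are additive simply spell out what the paper leaves implicit.
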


\begin{proof}
This is plainly evident from Proposition \ref{addone}, Theorem \ref{Lmet1} and Theorem \ref{Mathav}.
\end{proof}

Theorem \ref{Mathav} also provides for the following additional characterizations of
ultrametric spaces.

\begin{corollary}\label{cor1}
For a metric space $(X,d)$, the following statements are equivalent:
\begin{enumerate}
\item $X$ is ultrametric.
\item $X$ has strict $p$-negative type for all $p \geq 0$.
\item $X$ admits no $p$-polygonal equality for any $p \geq 0$.
\end{enumerate}
\end{corollary}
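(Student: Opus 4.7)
The plan is to prove the cycle $(1) \Rightarrow (2) \Rightarrow (3) \Rightarrow (1)$, using Theorem~\ref{Mathav} at both ends and Theorem~\ref{LTWthm} (including its strict version) in the middle.

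For $(1) \Rightarrow (2)$, I would first apply Theorem~\ref{Mathav} to conclude that $\mathfrak{q}(X,d) = \infty$, so every $p \geq 0$ is a generalized roundness exponent of $(X,d)$, and therefore $(X,d)$ has $p$-negative type for all $p \geq 0$ by Theorem~\ref{LTWthm}. To upgrade this to \emph{strict} $p$-negative type I pass to finite subspaces. Fix $p \geq 0$ and an arbitrary finite subset $F \subseteq X$; since $F$ is still ultrametric, the same argument gives $F$ $q$-negative type for every $q > p$, in particular for some $q > p$. By the Li--Weston strictification result quoted in Section~\ref{s3} (a finite metric space has strict $p$-negative type iff it has $q$-negative type for some $q > p$), $F$ has strict $p$-negative type, so inequality~(\ref{pneg}) is strict on $F$ for every nonzero coefficient vector summing to zero. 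As $F$ was arbitrary, this is exactly the definition of strict $p$-negative type for $(X,d)$.

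The implication $(2) \Rightarrow (3)$ is immediate from the final assertion of Theorem~\ref{LTWthm}: strict $p$-negative type is equivalent to strict inequality in~(\ref{wgr}) for every choice of pairwise distinct points with positive weights summing to one. A $p$-polygonal equality is precisely equality in~(\ref{wgr}) for such a configuration, so (2) forbids every $p$-polygonal equality.

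For $(3) \Rightarrow (1)$ I argue by contrapositive. Suppose $(X,d)$ is not ultrametric; then there exist three points $x,y,z \in X$ violating the ultrametric three-point condition. Let $F = \{x,y,z\}$ with the induced metric, a three-point metric space which is therefore not ultrametric. By Theorem~\ref{Mathav}, $\mathfrak{q}(F) < \infty$. By the remark in Section~\ref{s2} immediately following the definition of $p$-polygonal equality (which invokes Corollary~4.4 and Theorem~5.4 of \cite{Hli}), every finite metric space of finite generalized roundness $\wp$ admits a $\wp$-polygonal equality. Hence $F$ admits a $\wp$-polygonal equality for $\wp = \mathfrak{q}(F)$, and since all points involved lie in $F \subseteq X$, this is a $\wp$-polygonal equality in $X$, contradicting~(3).

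The main obstacle is the upgrade from non-strict to strict $p$-negative type in $(1) \Rightarrow (2)$: Theorem~\ref{Mathav} only directly yields $\mathfrak{q}(X,d) = \infty$, which a priori gives ordinary $p$-negative type. Passing to finite subspaces to exploit the Li--Weston strictification result is precisely what bridges this gap, and the definition of strict $p$-negative type (being testable on finite subsets) then lifts the conclusion back to all of $X$. Everything else is a direct bookkeeping exercise with the definitions and the strict portion of Theorem~\ref{LTWthm}.
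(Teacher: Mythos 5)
Your proof is correct, but its architecture differs from the paper's: the paper establishes the two bi-implications $(1)\Leftrightarrow(2)$ and $(2)\Leftrightarrow(3)$, whereas you run the cycle $(1)\Rightarrow(2)\Rightarrow(3)\Rightarrow(1)$. Two steps diverge in substance. For $(1)\Rightarrow(2)$, the paper applies Theorem 5.4 of \cite{Hli} directly to the whole (possibly infinite) space --- $gr(X,d)=[0,\infty)$ gives $q$-negative type for every $q$, hence strict $p$-negative type for every $p$ --- while you localize to finite subspaces, use only the finite-space ``strict $p$-negative type iff $q$-negative type for some $q>p$'' statement quoted in Section \ref{s3}, and lift back to $X$ via the (correct) observation that strict $p$-negative type is determined by finite subsets; this is slightly longer but more self-contained, since it avoids invoking the general-space form of the Li--Weston result. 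More interestingly, in place of the paper's $(3)\Rightarrow(2)$ (no polygonal equalities forces, by Corollary 4.4 of \cite{Hli}, every finite subspace to have infinite supremal $p$-negative type, hence strict $p$-negative type for all $p$, with $(2)\Rightarrow(1)$ then via Theorem \ref{LTWthm} and Theorem \ref{Mathav}), you prove $(3)\Rightarrow(1)$ by contrapositive: a non-ultrametric space contains a non-ultrametric three-point subspace $F$, Theorem \ref{Mathav} gives $\mathfrak{q}(F)<\infty$, and the remark following the definition of $p$-polygonal equality yields a $\wp$-polygonal equality in $F\subseteq X$ with $\wp=\mathfrak{q}(F)\geq 0$, contradicting (3). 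This is an attractive shortcut in that a single three-point configuration witnesses the failure of (3); note, though, that the remark you invoke itself rests on the same Corollary 4.4 and Theorem 5.4 of \cite{Hli}, so the underlying machinery is identical --- your cycle buys concreteness in closing the loop, while the paper's layout delivers the implication $(3)\Rightarrow(2)$ directly rather than routing it through (1).
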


\begin{proof}
(1) $\Rightarrow$ (2) Let $(X,d)$ be ultrametric.
By Theorem \ref{Mathav}, we have that $gr(X,d)= [0,\infty)$. So by Theorem 5.4 in Li and Weston \cite{Hli}
it follows that $(X,d)$ has strict $p$-negative type for all $p \geq 0$.

(2) $\Rightarrow$ (1) If $(X,d)$ has strict $p$-negative type for all $p \geq 0$, then
$\mathfrak{q}(X,d)=\infty$ by Theorem \ref{LTWthm}. This implies that $(X,d)$ is ultrametric by Theorem \ref{Mathav}.

(2) $\Rightarrow$ (3) Let $p \geq 0$. If $(X,d)$ has strict $p$-negative type, then the same is
true of every finite metric subspace of $X$. This entails that no $p$-polygonal equality can hold in $X$.

(3) $\Rightarrow$ (2) Suppose that for all $p \geq 0$, no $p$-polygonal equality holds in $(X,d)$.
This implies that no $p$-polygonal equality holds in any finite metric subspace of $X$. Therefore no
finite metric subspace of $X$ has finite supremal $p$-negative type by Corollary 4.4 in \cite{Hli}.
As a result, for all $p > 0$, each finite metric subspace of $X$ has strict $p$-negative type. This
implies (2). (The case $p = 0$ is trivial.)
\end{proof}

It is a deep result of metric geometry that every ultrametric space is isometric to a subset of some
Hilbert space. This result was developed independently by Kelly \cite{Ke1, Ke2}, Timan and Vestfrid \cite{Tim},
and Lemin \cite{Lem}. We will now indicate an alternate proof of this result by using Corollary
\ref{cor1} in conjunction with a classical embedding theorem of Schoenberg \cite[Theorem 1]{Sc3}.

\begin{corollary}\label{cor3}
Every ultrametric space is isometric to a subset of Hilbert space.
\end{corollary}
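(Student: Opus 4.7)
The plan is to combine Corollary \ref{cor1} with the classical embedding theorem of Schoenberg \cite[Theorem 1]{Sc3}. The version of Schoenberg's theorem we need states that a metric space $(X,d)$ is isometric to a subset of some Hilbert space if and only if $(X,d)$ has $2$-negative type, i.e.\ $d^{2}$ is conditionally negative definite on $X$.

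First, I would invoke Corollary \ref{cor1}. Since $(X,d)$ is ultrametric, that corollary gives that $(X,d)$ has strict $p$-negative type for every $p \geq 0$. Specialising to $p = 2$, this yields in particular that $(X,d)$ has $2$-negative type in the sense of Definition \ref{NTGRDEF}(1). No quantitative control or strictness is needed for the embedding conclusion; we only need the non-strict inequality \eqref{pneg} with $p = 2$.

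Next, I would apply Schoenberg's theorem to conclude that $(X,d)$ is isometric to a subset of Hilbert space. Since this is precisely the condition $\sum_{i,j} d(x_i, x_j)^2 \eta_i \eta_j \leq 0$ for all finite tuples $\{x_1,\ldots,x_n\}\subseteq X$ and all real scalars $\eta_1,\ldots,\eta_n$ with $\sum_i \eta_i = 0$, and this is exactly what $2$-negative type provides, the desired isometric embedding follows at once.

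There is no serious obstacle: the hard work has already been done in establishing Theorem \ref{Mathav} and Corollary \ref{cor1}. The only point worth flagging explicitly in the write-up is that Schoenberg's $2$-negative type criterion applies to arbitrary (not merely finite) metric spaces, so the cardinality of $X$ plays no role. A brief remark could also note that this route gives a conceptually cleaner derivation than the original arguments of Kelly \cite{Ke1, Ke2}, Timan and Vestfrid \cite{Tim}, and Lemin \cite{Lem}, since the entire embedding is reduced to verifying negative type, which in turn is an immediate consequence of the ultrametric three-point inequality via the chain of implications in Theorem \ref{Mathav} and Corollary \ref{cor1}.
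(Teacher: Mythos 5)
Your proposal is correct and follows essentially the same route as the paper: apply Corollary \ref{cor1} (2) with $p = 2$ to get $2$-negative type, then invoke Schoenberg's embedding theorem \cite[Theorem 1]{Sc3}. The one refinement the paper makes that you should absorb is that Schoenberg's theorem as originally stated concerns \emph{separable} metric spaces, so your remark that the criterion applies regardless of the cardinality of $X$ needs a supporting citation --- the paper points to the proofs of Lemma 2.3 and Theorem 2.4 in Wells and Williams \cite{Wel} for the transition to the non-separable case.
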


\begin{proof}
For a separable ultrametric space this corollary follows from Corollary \ref{cor1} (2) with $p = 2$
and Theorem 1 in Schoenberg \cite{Sc3}. In fact, Theorem 1 in \cite{Sc3} holds for a general metric
space. (The transition to the non-separable case is explained clearly in the proofs of
Lemma 2.3 and Theorem 2.4 in Wells and Williams \cite{Wel}.) Therefore the corollary holds in
full generality.
\end{proof}

It is also the case that Corollary \ref{cor1} and Theorem \ref{embed} imply the following isometric
embedding theorem that is originally due to Lemin \cite[Theorem 1.1]{Lem}.

\begin{corollary}\label{cor4}
Let $n > 1$ be an integer.
An $n+1$ point ultrametric space $(\{ x_{0}, x_{1}, \ldots, x_{n} \}, d)$ can be isometrically
embedded in the Euclidean space $\mathbb{R}^{n}$ of dimension $n$ but it cannot be isometrically
embedded in any Euclidean space $\mathbb{R}^{r}$ of dimension $r< n$.
\end{corollary}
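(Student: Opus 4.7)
The plan is to chain the two main results already established, namely Corollary \ref{cor1} and Theorem \ref{embed}, and observe that the conclusion falls out essentially immediately. First, I would invoke Corollary \ref{cor1}: since $(\{x_0, x_1, \ldots, x_n\}, d)$ is ultrametric, it has strict $p$-negative type for every $p \geq 0$. In particular, taking $p = 2$, the space has strict $2$-negative type.

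Next, I would apply Theorem \ref{embed}, which is tailor-made for the situation: an $n+1$ point metric space has strict $2$-negative type if and only if it embeds isometrically into $\mathbb{R}^n$ but into no Euclidean space $\mathbb{R}^r$ of strictly smaller dimension. The forward direction of that equivalence gives precisely what the corollary asks for.

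The argument is therefore a two-line deduction, and there is really no obstacle to overcome beyond citing the right statements in the right order. The only subtlety worth flagging is that Corollary \ref{cor1} requires the full strength of Theorem \ref{Mathav} (to know that $\mathfrak{q}(X,d) = \infty$) together with the interval result of Li and Weston to upgrade $p$-negative type to strict $p$-negative type for every $p \geq 0$; but this has already been done in the proof of Corollary \ref{cor1}, so we may use it as a black box here. I would conclude the proof in essentially one paragraph by stating these two invocations explicitly, with no computation required.
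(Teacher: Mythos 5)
Your proposal is correct and matches the paper's own proof exactly: the paper likewise deduces strict $2$-negative type from Corollary \ref{cor1} (with $p = 2$) and then applies Theorem \ref{embed} to conclude. Nothing further is needed.
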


\begin{proof}
All ultrametric spaces have strict $2$-negative type by Corollary \ref{cor1} (2) with $p = 2$.
The result now follows directly from Theorem \ref{embed}.
\end{proof}

Lemin's proof of Corollary \ref{cor4} proceeds in terms of a series of ingenious geometric lemmas and it
is quite different to the approach taken in this paper. It is worth noting that Theorem \ref{embed}
is more general than Corollary \ref{cor4}. For example, Proposition \ref{Lmet} shows how to construct finite
metric spaces $(L_{b,k},d)$ that have strict $2$-negative type (for suitably chosen $b$ and $k$)
but which are not ultrametric. Theorem \ref{embed} may also be used to generalize Lemin \cite[Theorem 1.2]{Lem}.
The idea is to consider metric spaces of infinite cardinality $\psi$ that have $p$-negative type for some $p > 2$.
For brevity, we will call such infinite metric spaces $\psi$-\textit{hard}. The class of all $\psi$-hard metric spaces
includes all ultrametric spaces of cardinality $\psi$ by Corollary \ref{cor1}. On the other hand, if we modify
Definition \ref{leafy} by letting $k = \psi$, then we obtain a non-ultrametric $\psi$-hard metric space $(L_{b,\psi}, d)$.
The supremal $p$-negative type of $(L_{b,\psi}, d)$ is $\log_{z}(2)$ where $z=2b/(b+1)$.

\begin{theorem}\label{psi:hard}
Every $\psi$-hard metric space $(X,d)$ is isometric to a closed subset of some algebraically
$\psi$-dimensional Euclidean space $E$. However, no $\psi$-hard metric space may be isometrically embedded
into any algebraically $\sigma$-dimensional Euclidean space $F$ for any $\sigma < \psi$.
\end{theorem}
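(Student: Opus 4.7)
The plan is to reduce the statement to the finite-point version encapsulated by Theorem \ref{embed} and then to invoke Schoenberg's Hilbert space embedding theorem \cite[Theorem 1]{Sc3}. First observe that the hypothesis of $\psi$-hardness, combined with the interval property of generalized roundness recorded in Remark \ref{int:rem}, implies that $(X,d)$ has $q$-negative type for every $q \in [0, p]$ and in particular $2$-negative type. The analogous interval property of \emph{strict} negative type (via Corollary 4.4 and Theorem 5.4 of Li and Weston \cite{Hli}) upgrades this to strict $2$-negative type, so every finite metric subspace of $(X,d)$ satisfies the hypothesis of Theorem \ref{embed}.

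For the claim that no $\psi$-hard space embeds isometrically into a Euclidean space of algebraic dimension $\sigma < \psi$, suppose for contradiction that such an embedding $X \hookrightarrow F$ exists. Fix a basepoint $x_0 \in X$. For any distinct $y_1, \ldots, y_n \in X \setminus \{x_0\}$, Theorem \ref{embed} applied to the $(n+1)$-point subspace $\{x_0, y_1, \ldots, y_n\}$ forces this subspace to span an $n$-dimensional affine subspace of $F$; equivalently, $y_1 - x_0, \ldots, y_n - x_0$ are linearly independent in $F$. Hence the entire family $\{ y - x_0 : y \in X \setminus \{x_0\} \}$ is a linearly independent subset of $F$ of cardinality $\psi$, contradicting $\dim_{\mathrm{alg}} F = \sigma < \psi$.

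For the existence of an embedding of $X$ into a Euclidean space of algebraic dimension exactly $\psi$, apply Schoenberg's theorem to produce an isometric embedding $\iota : X \hookrightarrow H$ into some real Hilbert space $H$. With $x_0 \in X$ fixed, let $E$ denote the \emph{algebraic} linear span of $\{ \iota(y) - \iota(x_0) : y \in X \}$ inside $H$. The linear-independence argument of the preceding paragraph gives $\dim_{\mathrm{alg}} E \geq \psi$, while $E$ is spanned by $\psi$ vectors, so $\dim_{\mathrm{alg}} E = \psi$ exactly. The map $y \mapsto \iota(y) - \iota(x_0)$ then isometrically identifies $X$ with a subset of the Euclidean space $E$.

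The main obstacle is to show that the image of $X$ is genuinely \emph{closed} in $E$, since $E$ is an incomplete inner product space whose metric topology must be reconciled with the internal behaviour of Cauchy sequences in $X$. When $(X,d)$ is complete, the isometric image is itself a complete subset of $E$ and hence closed, and the argument is clean; in the general case one should either assume completeness as part of the standing hypothesis (as is implicit in Lemin \cite[Theorem 1.2]{Lem}) or pass to the metric completion of $(X,d)$, checking that $\psi$-hardness is preserved by continuity of the defining inequality \eqref{pneg} and that the cardinality bookkeeping still yields algebraic dimension $\psi$. This last reconciliation between completeness, closedness, and the exact value of the algebraic dimension is the delicate step.
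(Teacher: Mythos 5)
Your first three paragraphs follow essentially the route the paper intends: the paper offers no self-contained proof of Theorem \ref{psi:hard}, stating only that the argument is nearly identical to that of Lemin \cite[Theorem 1.2]{Lem} with Theorem \ref{embed} substituted for Lemin's Theorem 1.1 and with the ultrametric hypothesis replaced by $p$-negative type for some $p > 2$ (whence strict $q$-negative type for all $q < p$, notably $q = 2$). Your reduction to strict $2$-negative type of all finite subspaces via Li and Weston \cite{Hli}, the affine-independence argument through Theorem \ref{embed} that rules out embeddings into algebraic dimension $\sigma < \psi$, and the construction of $E$ as the algebraic span of $\{\iota(y) - \iota(x_0)\}$ inside a Schoenberg--Wells--Williams Hilbert space embedding, with Hamel dimension exactly $\psi$, all match that blueprint and are correct.

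The genuine gap is your final paragraph: the closedness claim is asserted by the theorem for \emph{every} $\psi$-hard space, with no completeness hypothesis, so your first remedy (assume completeness) proves a strictly weaker statement, and your second remedy as described --- embed the metric completion and redo the bookkeeping --- fails twice over. The completion $\bar{X}$ of a space of cardinality $\psi$ can have cardinality strictly greater than $\psi$ (already for $\psi = \aleph_0$), so the span of the completed image need not have algebraic dimension $\psi$; worse, inside that span the image of $X$ is certainly \emph{not} closed, since every point of $\bar{X} \setminus X$ is a limit point of it. The completion must instead be used as an auxiliary device while keeping $E = \mathrm{span}\{\iota(y) - \iota(x_0) : y \in X\}$ spanned over $X$ alone. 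Suppose $\iota'(y_n) \to z \in E$, say $z = c_1 \iota'(w_1) + \cdots + c_k \iota'(w_k)$ with $w_i \in X$, and suppose $(y_n)$ does not converge in $X$; extend $\iota$ by uniform continuity to $\bar{\iota} : \bar{X} \to H$ and let $y^{*} = \lim y_n \in \bar{X} \setminus X$. Since the inequalities (\ref{pneg}) are preserved under pointwise limits of distances, $\bar{X}$ retains $p$-negative type for the same $p > 2$, so the $(k+2)$-point subset $\{x_0, w_1, \ldots, w_k, y^{*}\}$ of $\bar{X}$ has strict $2$-negative type; Theorem \ref{embed} then forces $\bar{\iota}'(w_1), \ldots, \bar{\iota}'(w_k), \bar{\iota}'(y^{*})$ to be linearly independent, contradicting $\bar{\iota}'(y^{*}) = z \in \mathrm{span}\{\iota'(w_1), \ldots, \iota'(w_k)\}$. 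Hence no limit point of $\iota'(X)$ outside $\iota'(X)$ can lie in the algebraic span $E$ at all, so $\iota'(X)$ is closed in $E$ with the dimension count untouched. This resolution --- the reason the theorem needs only the \emph{algebraic} span, and the precise role of the completion --- is the step your sketch identifies but leaves open.
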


The proof of Theorem \ref{psi:hard} is nearly identical to that of Theorem 1.2 in \cite{Lem}. The only changes are
that Theorem \ref{embed} is used in the place of Theorem 1.1 in \cite{Lem} and the ultrametric condition on the metric space
is replaced with $p$-negative type for some $p > 2$. It is then the case that the metric space has strict $q$-negative
type for all $q < p$ including, notably, $q = 2$. Moreover, by completing $E$, orthogonalizing the basis in it and taking
densities into account, we further see that: Every $\psi$-hard metric space of weight $\omega$ may be isometrically
embedded into a Hilbert space of weight $\omega$. The argument follows the exact same arc as that of
Lemin \cite[Theorem 1.3]{Lem}. In particular, every separable metric space that has $p$-negative type for
some $p > 2$ may be isometrically embedded into $\ell_{2}$. For separable ultrametric spaces, this result is
due to Lemin \cite{Lem} and Timan and Vestfrid \cite{Tim}.

It is worth noting that there is a Banach space generalization of Lemin \cite[Theorem 1.1]{Lem}.
Shkarin \cite{Shk} considers the class $\mathcal{M}$ of all finite metric spaces $(X,d)$,
$X = \{x_{0}, x_{1}, \ldots, x_{n} \}$ ($n > 1$), which admit an isometric embedding $\phi : X \rightarrow \ell_{2}$
such that the vectors $\{ \phi(x_{j}) - \phi(x_{0}) : 1 \leq j \leq n \}$ are linearly independent.
Theorem 1 in \cite{Shk} shows that for any $(X,d) \in \mathcal{M}$, there exists a natural
number $m = m(X,d)$ such that for any Banach space $B$ with $\dim B \geq m$, there exists an
isometric embedding of $(X,d)$ into $B$. The class $\mathcal{M}$ is not explicitly described
in \cite{Shk}, except to say that it contains all finite ultrametric spaces.
However, Theorem \ref{embed} readily implies that $\mathcal{M}$ consists of all finite metric spaces of strict
$2$-negative type.

\begin{corollary}\label{cor4.5}
Shkarin's class $\mathcal{M}$ consists of all finite metric spaces of strict $2$-negative type.
\end{corollary}

We note that Corollary \ref{cor4.5} has been obtained independently (using different techniques) by Weston \cite{We2}.

In the case of finite metric spaces there is an additional way to characterize ultrametricity.

\begin{corollary}\label{cor5}
Let $n \geq 1$. Let $(X,d) = (\{ x_{0}, x_{1}, \ldots, x_{n} \}, d)$ be a finite metric space
with associated $p$-distance matrices $D_{p} = (d^{p}(x_{i},x_{j}))$, $p \geq 0$. Then:
$(X,d)$ is ultrametric if and only if $\text{det}\, D_{p} \not= 0$ and
$\langle D_{p}^{-1} \mathbf{1}, \mathbf{1} \rangle \not= 0$ for all $p \geq 0$.
(As before, $\mathbf{1}$ denotes the vector whose entries are all $1$ and $\langle \cdot \, , \cdot \rangle$
is the standard inner-product.)
\end{corollary}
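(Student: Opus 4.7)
The plan is to derive Corollary \ref{cor5} by splicing Corollary \ref{cor1} together with the matrix-theoretic characterization of strict $p$-negative type from S\'{a}nchez \cite[Corollary 2.4]{San}, which was already the workhorse in the proof of Proposition \ref{Lmet}. Corollary \ref{cor1} replaces the ultrametric condition on $(X,d)$ by the condition that $(X,d)$ has strict $p$-negative type for every $p \geq 0$, so once the matrix conditions in the statement are recognized as the pointwise (in $p$) criterion for strict $p$-negative type, the corollary falls out as a routine ``for all $p \geq 0$'' bookkeeping.

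The first step is to isolate S\'{a}nchez's criterion in the form I need: for a finite metric space $(\{x_0,x_1,\ldots,x_n\},d)$ and a real number $p \geq 0$, the space has strict $p$-negative type if and only if the $p$-distance matrix $D_p$ is invertible and $\langle D_p^{-1}\mathbf{1},\mathbf{1}\rangle \neq 0$. (S\'{a}nchez actually gives the stronger information that this inner product is positive at exponents $p$ of strict negative type and vanishes at the supremal $p$-negative type exponent; only the ``nonzero'' half is needed here, and this is exactly how \cite[Corollary 2.4]{San} was applied in the proof of Proposition \ref{Lmet} to locate $\mathfrak{q}(L_{b,k},d)$.)

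With the criterion in hand, both directions are immediate. For the forward implication, assume $(X,d)$ is ultrametric. Then $(X,d)$ has strict $p$-negative type for every $p \geq 0$ by Corollary \ref{cor1}, so S\'{a}nchez's criterion applied at each $p \geq 0$ yields $\det D_p \neq 0$ and $\langle D_p^{-1}\mathbf{1},\mathbf{1}\rangle \neq 0$ simultaneously. For the converse, the matrix conditions feed into S\'{a}nchez's criterion pointwise in $p$ to give strict $p$-negative type for every $p \geq 0$, and then Corollary \ref{cor1} (implication $(2)\Rightarrow(1)$) forces $(X,d)$ to be ultrametric.

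The main obstacle is purely one of bookkeeping and citation rather than genuine mathematical difficulty: one has to pin down the precise statement of \cite[Corollary 2.4]{San} so that the equivalence ``strict $p$-negative type $\iff$ $D_p$ invertible and $\langle D_p^{-1}\mathbf{1},\mathbf{1}\rangle\neq 0$'' is transported verbatim. A tiny side issue is the degenerate case $p = 0$, where $D_0$ has zero diagonal and ones off the diagonal; a one-line calculation confirms that $D_0$ is invertible with $\langle D_0^{-1}\mathbf{1},\mathbf{1}\rangle = (n+1)/n \neq 0$, consistent with the well-known fact that every finite metric space has strict $0$-negative type.
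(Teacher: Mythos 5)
Your overall architecture --- S\'{a}nchez's matrix criterion spliced with Corollary \ref{cor1} --- is exactly the paper's route (the paper cites Theorem 2.3 of \cite{San} and is done in two sentences), but the pointwise criterion you isolate is false as stated, and this breaks the converse direction of your argument. Whatever the precise form of the results in \cite{San}, the hypothesis-free equivalence ``strict $p$-negative type $\iff$ $\det D_p \neq 0$ and $\langle D_p^{-1}\mathbf{1},\mathbf{1}\rangle \neq 0$'' cannot be transported verbatim, because the paper itself contains a counterexample: for the space $(L_{b,k},d)$ of Definition \ref{leafy}, the computation in the proof of Proposition \ref{Lmet} shows that $D_p$ is invertible for \emph{every} $p \geq 0$ and that $\langle D_p^{-1}\mathbf{1},\mathbf{1}\rangle = 2 - \bigl(\frac{k-1}{k}\bigr)z^p$, which vanishes only at $p = \log_z\bigl(\frac{2k}{k-1}\bigr) = \mathfrak{q}(L_{b,k},d)$. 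So for any $p$ strictly above the generalized roundness, both of your matrix conditions hold (the inner product is negative, hence nonzero), yet $(L_{b,k},d)$ fails to have $p$-negative type at all, let alone strictly. S\'{a}nchez's criterion carries the a priori hypothesis that the space has $p$-negative type at the exponent in question, so your step ``the matrix conditions feed into S\'{a}nchez's criterion pointwise in $p$'' is exactly where the proof fails. (Your forward direction and your $p = 0$ computation are fine, since the ``only if'' half of the criterion does hold pointwise.)

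The gap is repairable, and the repair explains why the corollary carries a universal quantifier over $p$ rather than being a pointwise statement. Suppose $\det D_p \neq 0$ and $\langle D_p^{-1}\mathbf{1},\mathbf{1}\rangle \neq 0$ for all $p \geq 0$, and assume $X$ is not ultrametric. By Theorem \ref{Mathav}, $\wp := \mathfrak{q}(X,d) < \infty$, and since $X$ is finite it has $\wp$-negative type but admits a $\wp$-polygonal equality (Corollary 4.4 and Theorem 5.4 of Li and Weston \cite{Hli}, as noted at the end of Section \ref{s2}); that is, the $\wp$-negative type is not strict. Now S\'{a}nchez's theorem applies legitimately at $p = \wp$, where the negative-type hypothesis is available, and forces $\det D_\wp = 0$ or $\langle D_\wp^{-1}\mathbf{1},\mathbf{1}\rangle = 0$, a contradiction; combined with Li--Weston's interval property (strict $p$-negative type for all $p < \wp$, hence here for all $p \geq 0$ since $\wp = \infty$) and Corollary \ref{cor1}, this completes the converse. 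Note also that the sign information you discarded in passing from ``positive'' to ``nonzero'' is what makes the global statement cohere: since $\det D_p \neq 0$ for all $p$, the map $p \mapsto \langle D_p^{-1}\mathbf{1},\mathbf{1}\rangle$ is continuous and equals $\frac{n+1}{n} > 0$ at $p = 0$, so under the corollary's hypotheses it remains positive on $[0,\infty)$ --- it is the first zero of this function (or of $\det D_p$) that locates $\mathfrak{q}(X,d)$ in \cite{San}, which is precisely how Corollary 2.4 of \cite{San} was used in Proposition \ref{Lmet}, and which is why your attribution of the pointwise ``nonzero'' criterion to that corollary is off the mark.
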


\begin{proof}
By Theorem 2.3 in \cite{San}, the condition on the $p$-distance matrices in the statement of the corollary
is equivalent to the metric space $(X,d)$ having strict $p$-negative type for all $p \geq 0$. We may
then apply Corollary \ref{cor1} to obtain the desired conclusion.
\end{proof}

It is entirely possible to view roundness as a precursor of the Banach space notion of Rademacher type.
There is a vast literature on Rademacher type and we refer the reader to the monograph Diestel \textit{et al}.\ \cite{Die}.
Indeed, if $p$ is a roundness exponent of a Banach space, then Theorem 2.1 in Enflo \cite{En1} implies that the
Banach space has Rademacher type $p$. Bourgain, Milman and Wolfson \cite{Bou} introduced a notion of
\textit{metric type} for a general metric space and showed that it is consistent with the notion of
Rademacher type for Banach spaces. Recently, Mendel and Naor \cite{Mmn} introduced a notion of \textit{scaled Enflo type}
and showed that a Banach space has scaled Enflo type $p$ if and only if it has Rademacher type $p$.
Once again, if $p$ is a roundness exponent of a metric space, then Theorem 2.1 in \cite{En1} implies that the
metric space has both metric type $p$ and scaled Enflo type $p$. In the light of these comments and Proposition
\ref{roundexp} we obtain the following corollary.

\begin{corollary}\label{scaledexp}
Let $(X,d)$ be an ultrametric space. Then $(X,d)$ has metric type $p$ and scaled Enflo type
$p$ for every $p \in [1, \infty)$.
\end{corollary}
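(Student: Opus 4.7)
The plan is to reduce the statement to the characterization of roundness exponents for ultrametric spaces just established, and then invoke the (already recalled) implication from roundness to the two type notions in question. Concretely, I would proceed in two short steps.

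First, I would appeal to Corollary \ref{roundexp}, which identifies the set of roundness exponents $r(X,d)$ of the ultrametric space $(X,d)$ with the interval $[1, \infty)$. Thus, given any $p \in [1, \infty)$, we know outright that $p \in r(X,d)$, i.e.\ the roundness inequality (\ref{r1}) holds in $(X,d)$ with exponent $p$.

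Second, I would invoke the chain of implications summarized in the paragraph preceding the corollary: by Theorem 2.1 in Enflo \cite{En1}, any roundness exponent of a metric space is automatically a metric type exponent in the sense of Bourgain--Milman--Wolfson \cite{Bou}, and, by the work of Mendel and Naor \cite{Mmn}, also a scaled Enflo type exponent. Applying this to the $p$ selected in the first step gives that $(X,d)$ has both metric type $p$ and scaled Enflo type $p$, for every $p \in [1, \infty)$, which is the desired conclusion.

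There is essentially no obstacle in this argument, as the nontrivial content has already been absorbed into Theorem \ref{Mathav} and Corollary \ref{roundexp} (which supply the full interval of roundness exponents) and into the cited results of Enflo \cite{En1}, Bourgain--Milman--Wolfson \cite{Bou}, and Mendel--Naor \cite{Mmn} (which translate roundness into metric type and scaled Enflo type). The only point to double-check is that the implication from roundness to scaled Enflo type holds without restriction on $p \in [1,\infty)$; this is exactly what the discussion preceding the corollary asserts, so the proof reduces to a single sentence citing these ingredients.
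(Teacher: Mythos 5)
Your proposal is correct and follows exactly the paper's own (implicit) argument: the paper derives Corollary \ref{scaledexp} from Corollary \ref{roundexp} together with the preceding remarks that, via Theorem 2.1 of Enflo \cite{En1} and the frameworks of Bourgain--Milman--Wolfson \cite{Bou} and Mendel--Naor \cite{Mmn}, any roundness exponent $p$ yields metric type $p$ and scaled Enflo type $p$. No gaps; your two-step reduction is precisely the intended proof.
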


\section*{Acknowledgments}

The research presented in this paper was initiated at the 2011 Cornell University \textit{Summer Mathematics Institute}
(SMI) and completed at the University of South Africa. The authors would like to thank the
Department of Mathematics and the Center for Applied Mathematics at Cornell University
for supporting this project, and the National Science Foundation for its financial support of
the SMI through NSF grant DMS-0739338. Additional financial support was provided by the Australian Catholic
University, Canisius College, University of New South Wales, and the University of South Africa. We are greatly indebted to a
number of our colleagues for providing highly insightful comments on our work, including the suggestion that we note
Corollary \ref{cor3.5} and give a wholly self-contained proof of Theorem \ref{Mathav}.

\bibliographystyle{amsalpha}

\begin{thebibliography}{Bar}

\bibitem{Bar} Y. Bartal, N. Linial, M. Mendel and A. Naor, \textit{On metric Ramsey-type phenomena},
              in: \textsl{Proceedings of the 35th Annual ACM Symposium on Theory of Computing},
              San Diego, CA, ACM, 2003, 463--472.

\bibitem{Lin} Y. Bartal, N. Linial, M. Mendel and A. Naor, \textit{Low dimensional embeddings of ultrametrics}, Eur. J.
              Comb. \textbf{25} (2004), 87--92.

\bibitem{Bl2} L. M. Blumenthal, \textsl{Theory and applications of distance geometry},
              Second edition, Chelsea Publishing Co., New York, 1970. MR 0268781 (42 \#3678)

\bibitem{Bou} J. Bourgain, V. Milman and H. Wolfson, \textit{On type of metric spaces}, Trans. Amer. Math. Soc
              \textbf{294} (1986), 295--317.

\bibitem{Bun} P. Buneman, \textit{A note on the metric properties of trees}, J. Comb. Th. (B)
              \textbf{17} (1974), 48--50.

\bibitem{Caf} E. Caffarelli, I. Doust and A. Weston, \textit{Metric trees of generalized roundness one},
              Aeq. Math. \textbf{83} (2012), 239--256.

\bibitem{deG} J. de Groot, \textit{Non-Archimedean metrics in topology}, Proc. Amer. Math. Soc.
              \textbf{7} (1956), 948--953.

\bibitem{Dek} B. V. Dekster and J. B. Wilker, \textit{Edge lengths guaranteed to form a simplex},
              Arch. Math. \textbf{49} (1987), 351--366. 

\bibitem{Dez} M. Deza and H. Maehara, \textit{Metric transforms and Euclidean Embeddings}, Trans. Amer. Math. Soc.
              \textbf{317} (1990), 661--671.
 
\bibitem{Die} J. Diestel, H. Jarchow and A. M. Tonge, \textsl{Absolutely summing operators}, in: Cambridge Studies
              in Advanced Mathematics \textbf{43} (1995), Cambridge University Press, Cambridge, UK, 1--474.

\bibitem{Dou} I. Doust and A. Weston, \textit{Enhanced negative type for finite metric trees}, J. Funct. Anal.
              \textbf{254} (2008), 2336--2364.

\bibitem{En1} P.\ Enflo, \textit{On the nonexistence of uniform homeomorphisms between $L_{p}$-spaces},
              Ark.\ Mat.\ \textbf{8} (1968), 103--105.

\bibitem{En2} P.\ Enflo, \textit{On a problem of Smirnov}, Ark.\ Mat.\ \textbf{8} (1969), 107--109.

\bibitem{En3} P.\ Enflo, \textit{Uniform structures and square roots in topological groups I}, Israel J.\ Math.\
              \textbf{8} (1970), 230--252.

\bibitem{En4} P.\ Enflo, \textit{Uniform structures and square roots in topological spaces II}, Israel J.\ Math.\
              \textbf{8} (1970), 253--272.

\bibitem{Fak} J. Fakcharoenphol, S. Rao, and K. Talwar, \textit{A tight bound on approximating arbitrary metrics by tree metrics},
              in: \textsl{Proceedings of the 35th Annual ACM Symposium on Theory of Computing}, San Diego, CA, ACM, 2003, 448--455.

\bibitem{Fie} M. Fiedler, \textit{Ultrametric sets in Euclidean point spaces}, Elec. J. Lin. Alg. \textbf{3} (1998), 23--30.

\bibitem{Gor} A. D. Gordon, \textit{A review of hierarchical classification}, J. Roy. Statist. Soc. (Ser. A)
              \textbf{150} (1987), 119--137.

\bibitem{Hig} N. Higson and J. Roe, \textit{On the coarse Baum-Connes conjecture};
              in \textsl{Novikov Conjectures, Index Theorems, and Rigidity}
              (Editors S. C. Ferry, A. Ranicki and J. M. Rosenberg),
              Lond. Math. Soc. Lecture Note Series \textbf{227} (1995), 227--254.

\bibitem{Hj1} P. Hjorth, P. Lison\v{e}k, S. Markvorsen and C. Thomassen,
              \textit{Finite metric spaces of strictly negative type}, Linear Algebra Appl.\ \textbf{270} (1998), 255--273.

\bibitem{Hj2} P. G. Hjorth, S. L. Kokkendorff and S. Markvorsen,
              \textit{Hyperbolic spaces are of strictly negative type}, Proc. Amer. Math. Soc. \textbf{130} (2002), 175--181.

\bibitem{Hol} J. E. Holly, \textit{Pictures of Ultrametric Spaces, the p-adic Numbers, and Valued Fields},
              The American Mathematical Monthly \textbf{108} (2001), 721--728.

\bibitem{Hu1} B. Hughes, \textit{Trees, ultrametrics and noncommutative geometry},
              Pure and Appl. Math. Qu. \textbf{8} (2012), 221--312.

\bibitem{Hu2} B. Hughes, \textit{Trees and ultrametric spaces: a categorical equivalence},
              Adv. Math. \textbf{189} (2004), 148--191.

\bibitem{Ke1} J. B. Kelly, \textit{Metric inequalities and symmetric differences},
              in: \textsl{Inequalities-II}, O. Shisha (ed.), Academic Press, New York, 1970, 193--212.

\bibitem{Ke2} J. B. Kelly, \textit{Hypermetric spaces and metric transforms},
              in: \textsl{Inequalities-III}, O. Shisha (ed.), Academic Press, New York, 1972, 149--158.

\bibitem{Laf} J-F. Lafont and S. Prassidis, \textit{Roundness properties of groups},
              Geom. Ded. \textbf{117} (2006), 137--160.

\bibitem{Lem} A. J. Lemin, \textit{Isometric embedding of isosceles (non-Archimedean) spaces in Euclidean spaces},
              Soviet Math. Dokl. \textbf{32} (3) (1985), 740--744.

\bibitem{Ltw} C. J. Lennard, A. M. Tonge and A. Weston, \textit{Generalized roundness and negative type}, Michigan Math. J.
              \textbf{44} (1997), 37--45.

\bibitem{Hli} H. Li and A. Weston, \textit{Strict p-negative type of a metric space}, Positivity \textbf{14} (2010), 529--545.

\bibitem{Mmn} M. Mendel and A. Naor, \textit{Scaled Enflo type is equivalent to Rademacher type}, Bull. Lond. Math. Soc.
              \textbf{39} (2007), 493--498.

\bibitem{Men} K. Menger, \textit{Die Metrik des Hilbert-Raumes}, Akad. Wiss. Wien Abh. Math.-Natur. K1
              \textbf{65} (1928), 159--160.

\bibitem{Me1} K. Menger, \textit{Untersuchungen \'{u}ber allgemeine Metrik}, Math. Ann. \textbf{100} (1928),
              75--163.

\bibitem{Mic} G. Michon, \textit{Les Cantors r\'{e}guliers}, C. R. Acad. Sci. Paris S\'{e}r. I Math.
              \textbf{300} (1985), 673--675.

\bibitem{Nao} A. Naor and G. Schechtman, \textit{Remarks on non linear type and Pisier's inequality},
              J. Reine Angew. Math. (Crelle's Journal) \textbf{552} (2002), 213--236.

\bibitem{Nic} P. Nickolas and R. Wolf, \textit{Distance geometry in quasihypermetric spaces. III},
              Math. Nachr. \textbf{284} (2011), 747--760. 

\bibitem{San} S. S{\'a}nchez, \textit{On the supremal $p$-negative type of a finite metric space},
              J. Math. Anal. Appl. \textbf{389} (2012), 98--107.

\bibitem{Sc1} I. J. Schoenberg, \textit{Remarks to Maurice Frechet's article ``Sur la d\'{e}finition axiomatique
d'une classe d'espaces distanci\'{e}s vectoriellement applicable sur l'espace de Hilbert.''}, Ann. Math. \textbf{36} (1935), 724--732.

\bibitem{Sc2} I. J. Schoenberg,
              \textit{On certain metric spaces arising from Euclidean spaces by a change of metric and their imbedding in Hilbert space},
              Ann. Math. \textbf{38} (1937), 787--793.

\bibitem{Sc3} I. J. Schoenberg, \textit{Metric spaces and positive definite functions}, Trans. Amer. Math. Soc.
              \textbf{44} (1938), 522--536.

\bibitem{Shk} S. A. Shkarin, \textit{Isometric embedding of finite ultrametric spaces in Banach spaces},
              Top. Appl. \textbf{142} (2004), 13--17.

\bibitem{Tim} A. F. Timan and I. A. Vestfrid, \textit{Any separable ultrametric space can be isometrically imbedded in $\ell_{2}$},
              Funktsional. Anal. i Prilozhen. 17 (1) (1983), 85-86 (in Russian); English transl.:
              Functional Anal. Appl. 17 (1983), 70--71.

\bibitem{Wat} S. Watson, \textit{The classification of metrics and multivariate statistical analysis}, Topology Appl.
              \textbf{99} (1999), 237--261.

\bibitem{Wel} J. H. Wells and L. R. Williams, \textsl{Embeddings and Extensions in Analysis}, Ergebnisse
              der Mathematik und ihrer Grenzgebiete \textbf{84} (1975).

\bibitem{We1} A. Weston, \textit{On the generalized roundness of finite metric spaces}, J. Math. Anal. Appl.
              \textbf{192} (1995), 323--334.

\bibitem{We2} A. Weston, \textit{Polygonal equalities in Hilbert spaces}, arXiv:1209.3454.

\bibitem{Wol} R. Wolf, \textit{On the gap of finite metric spaces of $p$-negative type},
              Lin. Alg. Appl. \textbf{436} (2012), 1246--1257.

\end{thebibliography}

\end{document}